\def\newthm#1#2{\newtheorem{#1}[dummy]{#2}%
  \expandafter\def\csname#2\endcsname##1{\hyperref[#1:##1]{#2~\ref*{#1:##1}}}}
\theoremstyle{definition}
\newcommand{\Section}[1]{\hyperref[sec:#1]{Section~\ref*{sec:#1}}}
\newcommand{\Table}[1]{\hyperref[tab:#1]{Table~\ref*{tab:#1}}}
\newcommand{\eqn}[1]{\hyperref[eqn:#1]{(\ref*{eqn:#1})}}
\DeclareMathOperator{\GL}{GL}
\DeclareMathOperator{\Gr}{Gr}
\DeclareMathOperator{\Fl}{Fl}
\DeclareMathOperator{\LG}{LG}
\DeclareMathOperator{\OG}{OG}
\DeclareMathOperator{\Aut}{Aut}
\DeclareMathOperator{\QH}{QH}
\DeclareMathOperator{\pt}{pt}
\DeclareMathOperator{\codim}{codim}
\DeclareMathOperator{\ch}{\ch}
\newcommand{\ssm}{\smallsetminus}
\newcommand{\bP}{{\mathbb P}}
\newcommand{\C}{{\mathbb C}}
\newcommand{\Q}{{\mathbb Q}}
\newcommand{\Z}{{\mathbb Z}}
\newcommand{\cS}{{\mathcal S}}
\newcommand{\cT}{{\mathcal T}}
\newcommand{\gw}[2]{\langle #1 \rangle^{\mbox{}}_{#2}}
\newcommand{\al}{{\alpha}}
\newcommand{\be}{{\beta}}
\newcommand{\la}{{\lambda}}
\DeclareMathOperator{\ev}{ev}
\newcommand{\wh}{\widehat}
\newcommand{\wb}{\overline}
\newcommand{\ov}{\overline}
\newcommand{\ignore}[1]{}
\newcommand{\Mb}{\wb{\mathcal M}}
\newcommand{\noin}{\noindent}
\newcommand{\lex}{{\operatorname{lex}}}
\begin{document}

\title
[Positivity determines quantum cohomology]
{Positivity determines the quantum cohomology of Grassmannians}

\date{May 14, 2019}

\author{Anders~S.~Buch}
\address{Department of Mathematics, Rutgers University, 110
  Frelinghuysen Road, Piscataway, NJ 08854, USA}
\email{asbuch@math.rutgers.edu}

\author{Chengxi Wang}
\address{Department of Mathematics, Rutgers University, 110
  Frelinghuysen Road, Piscataway, NJ 08854, USA}
\email{cw674@math.rutgers.edu}

\subjclass[2010]{Primary 14N35; Secondary 14M15, 05E05, 14N15}

\thanks{The authors were supported in part by NSF grant DMS-1503662.}

\begin{abstract}
  We prove that if $X$ is a Grassmannian of type A, then the Schubert
  basis of the (small) quantum cohomology ring $\QH(X)$ is the only
  homogeneous deformation of the Schubert basis of the ordinary
  cohomology ring $H^*(X)$ that multiplies with non-negative structure
  constants.  This implies that the (three point, genus zero)
  Gromov-Witten invariants of $X$ are uniquely determined by Witten's
  presentation of $\QH(X)$ and the fact that they are non-negative.
  We conjecture that the same is true for any flag variety $X = G/P$
  of simply laced Lie type.  For the variety of complete flags in
  $\C^n$, this conjecture is equivalent to Fomin, Gelfand, and
  Postnikov's conjecture that the quantum Schubert polynomials of type
  A are uniquely determined by positivity properties.  Our proof for
  Grassmannians answers a question of Fulton.
\end{abstract}

\maketitle

\section{Introduction}

In this paper we give a proof of Bertram's structure theorems
\cite{bertram:quantum} for the (small) quantum cohomology ring of a
Grassmannian that uses only that this ring is a graded and associative
deformation of the singular cohomology ring
\cite{ruan.tian:mathematical, kontsevich.manin:gromov-witten},
satisfies Witten's presentation \cite{witten:verlinde}, and the fact
that the defining Gromov-Witten invariants are non-negative.  We
expect that similar results hold more generally, so we will start by
discussing the quantum cohomology of flag manifolds in general.

Let $X = G/P$ be a flag variety defined by a complex semisimple linear
algebraic group $G$ and a parabolic subgroup $P$.  Fix a maximal torus
$T$ and a Borel subgroup $B$ such that
$T \subset B \subset P \subset G$.  The opposite Borel subgroup
$B^- \subset G$ is defined by $B \cap B^- = T$.  Let $W = N_G(T)/T$ be
the Weyl group of $G$, $W_P = N_P(T)/T$ the Weyl group of $P$, and let
$W^P \subset W$ be the subset of minimal representatives of the cosets
in $W/W_P$.  Each element $w \in W^P$ determines a $B$-stable Schubert
variety $X_w = \ov{B w.P}$ and an (opposite) $B^-$-stable Schubert
variety $X^w = \ov{B^- w.P}$, such that
$\dim(X_w) = \codim(X^w,X) = \ell(w)$.  The Schubert classes $[X^w]$
form a basis of the cohomology ring $H^*(X,\Z)$.

Let $\Phi$ be the root system of $G$, with positive roots $\Phi^+$ and
simple roots $\Delta \subset \Phi^+$.  The parabolic subgroup $P$ is
determined by the subset
$\Delta_P = \{ \be \in \Delta \mid s_\be \in W_P \}$.  The group
$H_2(X,\Z)$ has a basis consisting of curve classes $[X_{s_\be}]$
indexed by the simple roots $\be \in \Delta \ssm \Delta_P$.  The
elements of $H_2(X,\Z)$ will be called \emph{degrees}, and a degree
$d = \sum_\be d_\be [X_{s_\be}]$ is \emph{effective}, written
$d \geq 0$, if $d_\be \geq 0$ for each $\be$.

Given an effective degree $d \in H_2(X,\Z)$, we let $\Mb_{0,3}(X,d)$
denote the Kontsevich moduli space of 3-pointed stable maps
$f : C \to X$ of genus zero and degree $d$.  See
\cite{fulton.pandharipande:notes} for a construction of this moduli
space.
The dimension is given by
\[
  \dim \Mb_{0,3}(X,d) = \dim X + \int_d c_1(T_X) \,.
\]
Let $\ev_i : \Mb_{0,3}(X,d) \to X$ be the evaluation map that sends a
stable map $f$ to the image of the $i$-th marked point in its domain,
for $1 \leq i \leq 3$.  Given $u, v, w \in W^P$ and $d \in H_2(X,Z)$
effective, the corresponding (3 point, genus zero) \emph{Gromov-Witten
  invariant} of $X$ is defined by
\[
  \gw{[X^u], [X^v], [X_w]}{d} = \int_{\Mb_{0,3}(X,d)}
  \ev_1^*[X^u] \cdot \ev_2^*[X^v] \cdot \ev_3^*[X_w] \,.
\]
This invariant is non-zero only if
$\ell(u) + \ell(v) = \ell(w) + \int_d c_1(T_X)$, in which case it is
the number of parametrized curves $\bP^1 \to X$ of degree $d$ that map
three points in $\bP^1$ to fixed general translates of the Schubert
varieties $X^u$, $X^v$, and $X_w$ (see
\cite{fulton.pandharipande:notes}).

To simplify the statements of our results and conjectures, we will
work with quantum cohomology over the field of rational numbers.  Let
$\Q[q] = \Q[q_\be : \be \in \Delta\ssm\Delta_P]$ denote a polynomial
ring in one variable $q_\be$ for each Schubert curve $X_{s_\be}$.
These variables are called \emph{deformation parameters}.  We equip
this ring with the grading defined by
$\deg(q_\be) = \int_{X_{s_\be}} c_1(T_X)$.  Given an effective degree
$d = \sum_\be d_\be [X_{s_\be}]$ in $H_2(X,\Z)$, set
$q^d = \prod q_\be^{d_\be}$.  The (small) \emph{quantum cohomology
  ring} $\QH(X)$ is a graded $\Q[q]$-algebra, which as a
$\Q[q]$-module is defined by $\QH(X) = H^*(X,\Q) \otimes_\Q \Q[q]$.
In other words, the Schubert classes $[X^w]$ for $w \in W^P$ form a
$\Q[q]$-basis of this ring.  The grading is given by
$\deg\, [X^w] = \ell(w)$, and the multiplicative structure is defined
by
\[
  [X^u] \star [X^v] = \sum_{w,d\geq 0} \gw{[X^u], [X^v], [X_w]}{d}\,
  q^d\, [X^w]
\]
where the sum is over all $w \in W^P$ and effective $d \in H_2(X,\Z)$.
It was proved by Ruan and Tian \cite{ruan.tian:mathematical} and by
Kontsevich and Manin \cite{kontsevich.manin:gromov-witten} that this
product is associative.

The definition of the quantum cohomology ring came from physics, see
\cite{witten:verlinde}.  In mathematics the quantum ring $\QH(X)$
provides an effective tool for computing the Gromov-Witten invariants
of $X$, as these invariants can be calculated if enough is known about
the structure of the ring $\QH(X)$.  For example, the computation of
Gromov-Witten invariants is reduced to combinatorics if one knows all
invariants $\gw{[X^u], [X^v], [X_w]}{d}$ for which $[X^u]$ belongs to
a set of generators of the cohomology ring $H^*(X,\Q)$.  Formulas for
Gromov-Witten invariants of this type are frequently called
\emph{quantum Pieri formulas} or \emph{quantum Chevalley formulas},
see \cite{bertram:quantum, fomin.gelfand.ea:quantum,
  ciocan-fontanine:quantum*2, fulton.woodward:quantum,
  kresch.tamvakis:quantum, kresch.tamvakis:quantum*1,
  mihalcea:equivariant*1, buch.kresch.ea:quantum,
  huang.li:equivariant} for examples.

It is in general not enough to know a presentation of the ring
$\QH(X)$ by generators and relations for the Gromov-Witten invariants
of $X$ to be determined (see \Example{lg24}).  However, a presentation
together with a \emph{quantum Giambelli formula} that expresses each
Schubert class $[X^w]$ as a polynomial in the chosen generators of
$\QH(X)$ is sufficient.  Examples of presentations and Giambelli
formulas in quantum cohomology can be found in \cite{witten:verlinde,
  siebert.tian:quantum, bertram:quantum, givental.kim:quantum,
  kim:quantum, ciocan-fontanine:quantum, kim:quantum*1,
  fomin.gelfand.ea:quantum, kresch.tamvakis:quantum,
  kresch.tamvakis:quantum*1, mihalcea:giambelli,
  buch.kresch.ea:giambelli*1, buch.kresch.ea:giambelli,
  anderson.chen:equivariant}.

It should be noted that quantum cohomology is not functorial, so there
is in general no ring homomorphism $\QH(G/P) \to \QH(G/B)$ that
extends the pullback map $H^*(G/P,\Q) \to H^*(G/B,\Q)$ along the
projection $G/B \to G/P$.  For this reason, the study of quantum
cohomology must be carried out separately for each flag variety $X$.
Relations between the Gromov-Witten invariants of $G/B$ and $G/P$ do
exist in the form of the Peterson comparison formula
\cite{woodward:d}, but it requires work to extract algebraic
properties of the quantum ring from this formula (see e.g.\
\cite{huang.li:equivariant}).

The first examples of structure theorems for the quantum cohomology
ring $\QH(X)$ were provided by Witten \cite{witten:verlinde} and
Bertram \cite{bertram:quantum} when $X = \Gr(m,n)$ is the Grassmannian
of $m$-planes in $\C^n$.  Bertram's results, a Pieri formula and a
Giambelli formula for $\QH(X)$, were proved by applying results from
intersection theory to the quot scheme compactification of the moduli
space ${\mathcal M}_{0,3}(X,d)$ of parametrized curves.  More
elementary proofs were later given in \cite{buch:quantum} by studying
the kernels and spans of the curves counted by Gromov-Witten
invariants.  The idea is that if a Gromov-Witten invariant of a
Grassmannian is non-zero, then the \emph{span} of any counted curve,
defined as the linear span of the $m$-planes given by points of the
curve, will produce a point in a related intersection of Schubert
varieties in a different Grassmannian, and this intersection can be
studied with classical Schubert calculus.  More generally, it was
shown in \cite{buch.kresch.ea:gromov-witten} that the map that sends a
curve to its kernel-span pair provides a bijection between the curves
counted by a Gromov-Witten invariant with the points in an
intersection of Schubert varieties in a two-step flag variety.
Further generalizations of this \emph{quantum equals classical}
phenomenon can be found in \cite{chaput.manivel.ea:quantum*1,
  buch.kresch.ea:quantum, buch.mihalcea:quantum,
  buch.chaput.ea:projected} and the references therein.

The quantum cohomology ring of the variety $X = \GL(n)/B$ of complete
flags in $\C^n$ is described by structure theorems of Fomin, Gelfand,
and Postnikov \cite{fomin.gelfand.ea:quantum}.  The main result of
\cite{fomin.gelfand.ea:quantum} states that the \emph{quantum Schubert
  polynomials} provide Giambelli formulas for the Schubert classes in
$\QH(X)$.  In addition to a known presentation of the ring $\QH(X)$
\cite{givental.kim:quantum, kim:quantum, ciocan-fontanine:quantum} and
a special case of the Giambelli formula
\cite{ciocan-fontanine:quantum}, the proof uses the geometric fact
that the structure constants of $\QH(X)$ are non-negative integers.
Moreover, it was conjectured in \cite{fomin.gelfand.ea:quantum} that
the quantum Schubert polynomials are uniquely determined by the
non-negativity of their structure constants, in addition to the
presentation, grading, and deformation properties of $\QH(X)$.
Positivity properties have received much attention in the study of
quantum cohomology \cite{mihalcea:positivity,
  anderson.chen:positivity, buch.kresch.ea:puzzle, buch:mutations},
but usually as a benchmark of our understanding of the combinatorial
aspects of quantum cohomology rather than a tool for proving other
results.

Fulton asked the related question, whether the quantum cohomology ring
of a Grassmannian is uniquely determined by positivity conditions
\cite{fulton:private}.  The immediate answer is no; it is always
possible to rescale the deformation parameter $q$ by a positive factor
$\al > 0$, or equivalently, multiply each Gromov-Witten invariant
$\gw{[X^u], [X^v], [X_w]}{d}$ by $\al^{-d}$.  However, this
modification will change the relations among the Schubert class
generators of $\QH(X)$.  In this paper we show that, for
Grassmannians, rescaling the Gromov-Witten invariants is the only
change to the quantum cohomology ring that preserves its formal
properties as well as positivity of its structure constants
(\Corollary{classify_qdeform}).  By adding the condition that the
quantum cohomology ring satisfies Witten's presentation
\cite{witten:verlinde}, this ring is uniquely determined, and
Bertram's structure theorems can be proved with purely combinatorial
methods.

Our results establish the Grassmannian case of the following
conjecture, which also generalizes the conjectured uniqueness of
quantum Schubert polynomials \cite{fomin.gelfand.ea:quantum}.

\begin{conj}\label{conj:unique}
  Let $X = G/P$ be any flag variety of simply laced Lie type.  Then
  the Schubert basis of $\QH(X)$ is the only homogeneous $\Q[q]$-basis
  that deforms the Schubert basis of $H^*(X,\Q)$ and multiplies with
  non-negative structure constants.
\end{conj}

In other words, if $\{ \tau_w \mid w \in W^P \}$ is any $\Q[q]$-basis
of $\QH(X)$ such that $\tau_w$ is homogeneous of degree $\ell(w)$,
$\tau_w - [X^w]$ belongs to the ideal $\langle q \rangle$ generated by
the deformation parameters $q_\be$, and each product
$\tau_u \star \tau_v$ is a non-negative linear combination of the
$\Q$-basis $\{ q^d \tau_w \}$, then we have $\tau_w = [X^w]$ for all
$w \in W^P$.

In addition to Grassmannians of type A, we have verified
\Conjecture{unique} when $X = \OG(n,2n)$ is a maximal orthogonal
Grassmannian of type D$_n$, with $n \leq 6$, and when $X = \OG(1,2n)$
is any quadric hypersurface of even dimension.  The condition that the
root system of $G$ is simply laced is necessary, since the conjecture
is false for the Lagrangian Grassmannian $\LG(2,4)$ of type C$_2$, a
3-dimensional quadric.  These examples are treated
in \Section{qdeform}.  We have also obtained computational
verification of \Conjecture{unique} for all partial flag varieties
$\GL(n)/P$ of dimension at most 10, except $\Fl(1,2,3;\C^5)$,
$\Fl(1,2,4;\C^5)$, and $\Fl(1,2,3,4;\C^5)$.  We note that, since the
number of monomials in $\Q[q]$ grows very fast with respect to total
degree when $X$ has large Picard rank, so does the computational
complexity of this problem.

Our proof of \Conjecture{unique} for Grassmannians of type A uses that
quantum multiplication with the top Chern class of a tautological
vector bundle maps any Schubert class to a power of $q$ times a
different Schubert class.  This follows from Bertram's quantum Pieri
formula \cite{bertram:quantum} and is a special case of the Seidel
representation of the group $\pi_1(\Aut(X))$ on
$\QH(X)/\langle q-1 \rangle$, which was introduced in \cite{seidel:1}
and computed explicitly for flag varieties in
\cite{chaput.manivel.ea:affine}.  Here $\langle q-1 \rangle$ is the
ideal generated by all differences $q_\be - 1$.  For any flag variety
$X$, the action of this representation is given by quantum
multiplication by certain Schubert classes $[X^u]$ of finite order in
the group of units $(\QH(X)/\langle q-1 \rangle)^\times$.  We show
that basis elements of finite order in any non-negative quantum
deformation must behave in a similar way (\Lemma{basis2basis}), which
gives a way to compare different basis elements.  For the Grassmannian
$X = \Gr(m,n)$ we then observe that the sum of the degrees of the
Schubert classes in any Seidel orbit is always the same number, namely
$\frac{1}{2} mn(n-m)$ (\Lemma{seidelsum}).  This is a special
phenomenon for Grassmannians of type A; a counterexample for the
variety of complete flags in $\C^6$ is provided in
\Example{counterexample}.  The Grassmannian case of
\Conjecture{unique} is proved by combining these observations.

The quantum cohomology ring of a flag variety is generalized by the
quantum $K$-theory ring \cite{givental:wdvv, lee:quantum*1}, which has
received much attention in recent years, see e.g.\
\cite{buch.mihalcea:quantum, buch.chaput.ea:finiteness,
  gorbounov.korff:quantum, iritani.milanov.ea:reconstruction,
  buch.chaput.ea:chevalley, lam.li.ea:conjectural, kato:frobenius,
  kato:loop, anderson.chen.ea:quantum, buch.chung.ea:euler}.  In
contrast to quantum cohomology, structure theorems for the quantum
$K$-theory ring are known only for cominuscule spaces
\cite{buch.mihalcea:quantum, gorbounov.korff:quantum,
  buch.chaput.ea:chevalley}, while conjectures exist in other cases
\cite{lenart.maeno:quantum, lenart.postnikov:affine}.  It would be
very interesting to find a way to uniquely describe the quantum
$K$-theory ring in terms of combinatorial properties.  The same
question can be asked about the equivariant quantum cohomology ring
\cite{kim:equivariant, kim:quantum*1, mihalcea:equivariant*1,
  mihalcea:positivity, anderson.chen:positivity}.

Another interesting open problem is to give a combinatorial proof that
the quantum cohomology ring of a Grassmannian, as defined by Bertram's
structure theorems \cite{bertram:quantum}, has non-negative structure
constants (see \cite{bertram.ciocan-fontanine.ea:quantum}).  While
positive formulas for these Gromov-Witten invariants do exist
\cite{buch.kresch.ea:puzzle, buch:mutations}, their proofs utilize the
quantum equals classical theorem \cite{buch.kresch.ea:gromov-witten,
  buch.mihalcea:quantum} rather than studying the quantum cohomology
ring directly.  The methods presented in this paper do not solve this
problem; we simply prove that any quantum deformation of a
Grassmannian with non-negative structure constants must obey Bertram's
structure theorems.

This paper is organized as follows.  In \Section{qdeform} we formulate
the notion of a \emph{quantum deformation}, which captures the
properties of the quantum cohomology ring that are automatic from its
definition.  We also provide some initial examples of
\Conjecture{unique}.  In \Section{grassmannians} we prove that any
non-negative quantum deformation of a Grassmannian is obtained from
the quantum cohomology ring by rescaling the defining Gromov-Witten
invariants.  This section can also be read as a combinatorial
introduction to the quantum cohomology of Grassmannians.  We thank
W.~Fulton for his inspiring question.

\section{Quantum deformations}\label{sec:qdeform}

\subsection{Quantum deformations}

The (small) quantum cohomology ring $\QH(X)$ of a flag variety
$X = G/P$ is a quantum deformation of the singular cohomology ring
$H^*(X,\Q)$ in the following sense.

\begin{defn}\label{defn:qdeform}
  A \emph{quantum deformation} of $H^*(X,\Q)$ is a graded associative
  $\Q[q]$-algebra $\QH$ with unit, together with a $\Q[q]$-basis
  $\{ \tau_w : w \in W^P \}$ of $\QH$ indexed by $W^P$, such that
  each basis element $\tau_w$ is homogeneous of degree $\ell(w)$,
  and the assignments $\tau_w \mapsto [X^w]$ and $q_\be \mapsto 0$
  define a ring homomorphism $\QH \to H^*(X,\Q)$.
\end{defn}

We will often denote a quantum deformation $(\QH, \{\tau_w\})$ simply
by $\QH$, and we will occasionally misuse terminology and call it a
quantum deformation of $X$.  Notice that
$\QH/\langle q \rangle \cong H^*(X,\Q)$, where
$\langle q \rangle \subset \QH$ is the ideal generated by the
deformation parameters $q_\be$.  In addition, the set
$\cT = \{ q^d \tau_w \mid w \in W^P, d \geq 0 \}$ is a basis of $\QH$
as a vector space over $\Q$.  The structure constants of $\QH$ with
respect to this basis are the numbers $N^{w,d}_{u,v} \in \Q$, indexed
by $u,v,w \in W^P$ and $d \geq 0$, defined by the identity
\[
  \tau_u \tau_v = \sum_{w,d} N^{w,d}_{u,v}\, q^d\, \tau_w \,.
\]
We will say that the quantum deformation $\QH$ is \emph{non-negative}
if $N^{w,d}_{u,v} \geq 0$ for all $u,v,w \in W^P$ and $d \geq 0$.

The quantum deformation of main interest is the quantum cohomology
ring $\QH(X)$, equipped with its basis of Schubert classes.  Since the
Gromov-Witten invariants of $X$ are enumerative
\cite{fulton.pandharipande:notes}, we know a priori that $\QH(X)$ is a
non-negative quantum deformation.  It is natural to ask which
non-negative quantum deformations exist.

One way to produce new quantum deformations is to replace the Schubert
basis of $\QH(X)$ with any $\Q[q]$-basis $\{ \tau_w : w \in W^P \}$
for which each element $\tau_w$ is a homogeneous deformation of the
Schubert class $[X^w]$, i.e.\
$\tau_w - [X^w] \in \langle q \rangle$.  Such deformations will be
called \emph{change-of-basis} quantum deformations of $H^*(X,\Q)$.

We remark that historically, a presentation of the quantum ring
$\QH(X)$ was known early on for many flag varieties
\cite{witten:verlinde, givental.kim:quantum, ciocan-fontanine:quantum,
  kim:quantum*1}.  For example, Witten's paper \cite{witten:verlinde}
contains a presentation of $\QH(X)$ when $X$ is a Grassmannian (see
also \cite{siebert.tian:quantum}).  When a presentation is known, a
determination of the quantum ring $\QH(X)$ and the associated
Gromov-Witten invariants of $X$ is equivalent to finding the correct
non-negative change-of-basis quantum deformation of $X$, so it is also
natural to ask for a classification of these deformations.
\Conjecture{unique} states that, if $X$ is of simply laced Lie type,
then $\QH(X)$ is the only non-negative change-of-basis quantum
deformation up to isomorphism.\footnote{An (iso)morphism
  $(\QH,\{\tau_w\}) \to (\QH',\{\tau'_w\})$ of quantum deformations is
  a $\Q[q]$-algebra homomorphism that maps $\tau_w$ to $\tau'_w$ for
  each $w \in W^P$.}

\subsection{General properties}

Let $(\QH, \{\tau_w\})$ be a quantum deformation of $H^*(X,\Q)$ and
let $\phi : \QH \to H^*(X,\Q)$ be the ring homomorphism of
\Definition{qdeform}.  Assume that $\phi(\eta) = [X^u]$ for some
$\eta \in \QH$ and $u \in W^P$.  Then
$\eta - \tau_u \in \langle q \rangle$.  If $\eta$ is homogeneous with
$\deg(\eta) < \deg(q_\be)$ for all $\be \in \Delta \ssm \Delta_P$,
then this implies that $\eta = \tau_u$.  For example we have
$\tau_e = 1$, where $e \in W$ is the identity element.

\begin{lemma}\label{lemma:basis2basis}
  Let $(\QH, \{\tau_w\})$ be any non-negative quantum deformation of
  $H^*(X,\Q)$ and let $u_1,u_2,\dots,u_\ell \in W^P$.  Assume that the
  product $\tau_{u_\ell} \tau_{u_{\ell-1}} \cdots \tau_{u_1}$ is a
  non-zero multiple of a monomial from $\Q[q]$, that is,
  $\tau_{u_\ell} \tau_{u_{\ell-1}} \cdots \tau_{u_1} = c\, q^d$ for
  some rational number $c > 0$ and effective degree $d$.
  \textnormal{(1)} For every $v \in W^P$, the product
  $\tau_{u_1} \tau_v$ is a positive multiple of a single element from
  $\cT = \{ q^d \tau_w \}$.  \textnormal{(2)} If
  $[X^{u_1}] \cdot [X^v] = [X^w]$ holds in $H^*(X,\Q)$, then
  $\tau_{u_1} \tau_v = \tau_w$ holds in $\QH$.
\end{lemma}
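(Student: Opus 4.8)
The plan is to exploit the hypothesis that a product of basis elements collapses to a positive monomial $c\,q^d$, together with non-negativity, to force a rigidity on how $\tau_{u_1}$ multiplies. The key observation is that the basis $\cT = \{q^d \tau_w\}$ of $\QH$ over $\Q$ has a distinguished element with no $q$'s, namely $\tau_e = 1$, and that $q$-monomials form a monoid acting on $\cT$ by multiplication. First I would record the elementary fact that if a product $\tau_{u_\ell}\cdots\tau_{u_1}$ of basis elements equals $c\,q^d$ with $c>0$, then in the quotient $H^*(X,\Q) = \QH/\langle q\rangle$ we get $[X^{u_\ell}]\cdots[X^{u_1}] = 0$ unless $d = 0$; and if $d=0$ then $c\,1$ maps to $c$, forcing $c=1$ and each $[X^{u_i}]=1$, hence each $\tau_{u_i}=1$ by the remark preceding the lemma — so that case is trivial. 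Thus I may assume $d$ is effective and nonzero, and the interesting content is that $\tau_{u_1}$ divides a power of $q$ (up to scalar) in the ring $\QH$.

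For part (1): fix $v \in W^P$ and expand $\tau_{u_1}\tau_v = \sum_{w,e\geq 0} N^{w,e}_{u_1,v}\, q^e\,\tau_w$ with all coefficients $\geq 0$. Multiply this identity on the left by $\tau_{u_\ell}\cdots\tau_{u_2}$. By associativity the left side becomes $(\tau_{u_\ell}\cdots\tau_{u_2})\,(\tau_{u_1}\tau_v) = (\tau_{u_\ell}\cdots\tau_{u_1})\,\tau_v = c\,q^d\,\tau_v$, which is a single basis element of $\cT$ scaled by $c>0$. On the right side we get $\sum_{w,e} N^{w,e}_{u_1,v}\,q^e\,(\tau_{u_\ell}\cdots\tau_{u_2})\,\tau_w$; expanding each $(\tau_{u_\ell}\cdots\tau_{u_2})\,\tau_w$ again in the basis $\cT$, every coefficient appearing is $\geq 0$ by non-negativity. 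So $c\,q^d\tau_v$ is written as a non-negative combination of $\cT$. Since $c\,q^d\tau_v$ is itself (a positive scalar times) a single basis element, and the expression on the right is a non-negative combination, at most one "branch" can survive: more precisely, there is exactly one pair $(w,e)$ with $N^{w,e}_{u_1,v} > 0$, and for that pair $(\tau_{u_\ell}\cdots\tau_{u_2})\,\tau_w$ must itself be a positive multiple of a single element of $\cT$. This needs the small lemma that a product of a non-negative combination with positive coefficients cannot cancel, so the number of nonzero terms can only shrink, never grow — which follows because all structure constants are $\geq 0$ and $\cT$ is a basis. Hence $\tau_{u_1}\tau_v = N^{w,e}_{u_1,v}\, q^e\,\tau_w$ is a positive multiple of a single element of $\cT$, proving (1).

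For part (2): suppose $[X^{u_1}]\cdot[X^v] = [X^w]$ in $H^*(X,\Q)$. By part (1), $\tau_{u_1}\tau_v = c'\,q^e\,\tau_{w'}$ for some $c'>0$, some effective $e$, and some $w' \in W^P$. Applying the ring homomorphism $\phi:\QH\to H^*(X,\Q)$ of \Definition{qdeform}, which kills $q$ and sends $\tau_{w'}\mapsto[X^{w'}]$, the left side maps to $[X^{u_1}]\cdot[X^v] = [X^w] \neq 0$, so $e$ must be $0$, and then $\phi(c'\tau_{w'}) = c'[X^{w'}] = [X^w]$; since Schubert classes are linearly independent in $H^*(X,\Q)$ this forces $w' = w$ and $c' = 1$. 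Therefore $\tau_{u_1}\tau_v = \tau_w$, as claimed.

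The main obstacle is the combinatorial non-cancellation argument in part (1): one must argue cleanly that left-multiplication by $\tau_{u_\ell}\cdots\tau_{u_2}$, expanded in the $\Q$-basis $\cT$, cannot produce cancellation among the non-negatively-weighted terms coming from distinct $(w,e)$, and hence that collapsing to a single basis vector on the left forces a single surviving term on the right. I expect this to require nothing deeper than positivity plus linear independence of $\cT$, but it is the step where the hypotheses are genuinely used, so it deserves to be spelled out carefully rather than waved through.
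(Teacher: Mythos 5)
Your overall strategy matches the paper's: use associativity to hit $\tau_{u_1}\tau_v$ with the remaining factors, invoke non-negativity, and conclude that the expansion must collapse. The paper does this by choosing a maximal index $p$ so that $\tau_{u_p}\cdots\tau_{u_1}\tau_v$ has $r\geq 2$ terms but $\tau_{u_{p+1}}\tau_{u_p}\cdots\tau_{u_1}\tau_v$ has one; you instead multiply by all of $P=\tau_{u_\ell}\cdots\tau_{u_2}$ at once. Either bookkeeping is fine, and your treatment of part (2) correctly fills in what the paper leaves implicit.

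There is, however, a real gap in your justification of part (1). Non-negativity alone only gives you ``no cancellation'': if $\sum_{(w,e)} N^{w,e}_{u_1,v}\,q^e\,P\tau_w = c\,q^d\tau_v$ with all terms expanding non-negatively in $\cT$, then every $q^e P\tau_w$ with $N^{w,e}_{u_1,v}>0$ is a \emph{non-negative} scalar multiple of $q^d\tau_v$. That does not yet force a unique surviving pair $(w,e)$: two distinct pairs could both yield positive multiples of $q^d\tau_v$, and a third could yield $0$. Your ``small lemma'' that the number of nonzero terms ``can only shrink, never grow'' is not a consequence of positivity plus $\cT$ being a basis (indeed multiplying by a basis element generally \emph{increases} the number of terms), and in any case shrinkage is precisely what you must rule out. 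What actually closes the argument is that multiplication by $P$ is \emph{injective}: since $P\tau_{u_1}=c\,q^d$ and $q^d$ is a non-zero divisor on the free $\Q[q]$-module $\QH$, $P$ is a non-zero divisor, hence left multiplication by $P$ is injective. Then no $P\tau_w$ can vanish, and two distinct elements $q^{e_1}\tau_{w_1}\ne q^{e_2}\tau_{w_2}$ of $\cT$ cannot both be sent to scalar multiples of the same $q^d\tau_v$, which is the contradiction. The paper's proof states this injectivity step explicitly as the final contradiction; you gesture toward it without identifying it, and the ``non-cancellation'' lemma you cite is both misstated and insufficient to carry the weight you put on it.
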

\begin{proof}
  If part (1) is false, then since
  $\tau_{u_\ell} \cdots \tau_{u_1} \tau_v = c\, q^d \tau_v$, we can
  choose $p > 0$ such that the expansion of
  $\tau_{u_p} \cdots \tau_{u_1} \tau_v$ involves more than one basis
  element from $\cT$, whereas
  $\tau_{u_{p+1}} \tau_{u_p} \cdots \tau_{u_1} \tau_v$ is a multiple
  of a single basis element $q^e \tau_w$.  Write
  $\tau_{u_p} \cdots \tau_{u_1} \tau_v = b_1\, q^{d_1} \tau_{v_1} +
  \dots + b_r\, q^{d_r} \tau_{v_r}$ where $r \geq 2$ and $b_i > 0$ for
  $1 \leq i \leq r$.  Since $\tau_{u_{p+1}} q^{d_i} \tau_{v_i}$ is a
  non-negative linear combination of $\cT$ for each $i$, and since
  $\tau_{u_{p+1}} \sum b_i\, q^{d_i} \tau_{v_i}$ is a multiple of
  $q^e \tau_w$, we deduce that $\tau_{u_{p+1}}\, q^{d_i} \tau_{v_i}$
  is a multiple of $q^e \tau_w$ for each $i$.  This contradicts that
  multiplication by $\tau_{u_{p+1}}$ is an injective operation on
  $\QH$.  Finally, part (2) follows from part (1).
\end{proof}

\subsection{Examples}

We finish this section by examining \Conjecture{unique} for a
selection of cominuscule spaces $X = G/P$.  The quantum cohomology
rings these varieties contain only one deformation parameter which we
denote by $q$.

\begin{example}\label{example:evenquadric}
  Let $X = \OG(1,2n)$ be the quadric hypersurface of dimension $2n-2$.
  A description of the Schubert classes and quantum cohomology ring of
  this variety can be found in \cite{chaput.manivel.ea:quantum*1,
    buch.kresch.ea:quantum}.  Let $\{\tau_w : w \in W^P\}$ be any
  homogeneous deformation of the Schubert basis of $\QH(X)$ that
  multiplies with non-negative structure constants.  Since
  $\deg(q) = \dim(X) = 2n-2$, it follows that $\tau_w = [X^w]$ for all
  $w \in W^P$, except for the element $w_0^P$ representing a point.
  Let $u \in W^P$ have length $n-1$, and choose $u^\vee \in W^P$ such
  that $[X^u] \cdot [X^{u^\vee}] = [X^{w_0^P}]$ in $H^*(X;\Q)$.  It
  follows from \cite[Thm.~1]{chaput.manivel.ea:affine} or
  \cite[Thm.~3.4]{buch.kresch.ea:quantum} that
  $\tau_u^4 = [X^u]^4 = q^2$, so we deduce from \Lemma{basis2basis}
  that
  $\tau_{w_0^P} = \tau_u \star \tau_{u^\vee} = [X^u] \star
  [X^{u^\vee}] = [X^{w_0^P}]$.  This shows that \Conjecture{unique}
  holds for $X$.  Similar arguments can be used to show that any
  non-negative quantum deformation of $X$ is obtained from $\QH(X)$ by
  rescaling the defining Gromov-Witten invariants.
\end{example}

\begin{example}\label{example:maxog6}
  Let $X = \OG(n,2n)$ be the maximal orthogonal Grassmannian of type
  D$_n$.  The Schubert classes and quantum cohomology of this space
  are described in \cite{kresch.tamvakis:quantum*1}.  We have
  $\dim(X) = \binom{n}{2}$ and $\deg(q) = 2n-2$.  The elements of
  $W^P$ can be identified with strict partitions
  $\la = (\la_1 > \la_2 > \dots > \la_\ell > 0)$ with
  $\la_1 \leq n-1$, so that $\codim(X^\la,X) = |\la| = \sum \la_i$.
  Let $\{\tau_\la\}$ be any homogeneous deformation of the Schubert
  basis of $\QH(X)$ that multiplies with non-negative structure
  constants.  Then we have $\tau_\la = [X^\la]$ for $|\la| < 2n-2$.
  The classical Pieri formula \cite{hiller.boe:pieri} implies that
  $[X^{n-1}] \cdot [X^\la] = [X^{n-1,\la}]$ holds in $H^*(X,\Q)$
  whenever $\la_1 < n-1$, and \cite[Cor.~5]{kresch.tamvakis:quantum*1}
  gives $\tau_{n-1}^2 = [X^{n-1}]^2 = q$.  \Lemma{basis2basis}
  therefore implies that
  $\tau_{n-1,\la} = \tau_{n-1} \star \tau_\la = [X^{n-1}] \star
  [X^\la] = [X^{n-1,\la}]$ for all strict partitions $\la$ such that
  $\la_1 < n-1$ and $|\la| < 2n-2$.  This proves \Conjecture{unique}
  when $n \leq 5$, and for $n = 6$ we obtain $\tau_\la = [X^\la]$ for
  all strict partitions except $\nu = (4,3,2,1)$ and $(5,\nu)$.  We
  must have $\tau_\nu = [X^\nu] + cq$ and
  $\tau_{5,\nu} = \tau_5 \star \tau_\nu = [X^{5,\nu}] + cq \tau_{5}$
  for some constant $c \in \Q$.  Since
  $\tau_1 \star \tau_{4,3,2} = [X^1] \star [X^{4,3,2}] = [X^{5,3,2}] +
  [X^\nu] = \tau_{5,3,2} + \tau_\nu - cq$, it follows that $c \leq 0$.
  Since
  $\tau_1 \star \tau_\nu = [X^1] \star [X^\nu] + c q [X^1] =
  [X^{5,3,2,1}] + cq [X^1] = \tau_{5,3,2,1} + cq \tau_1$, it follows
  that $c \geq 0$.  This shows that \Conjecture{unique} also holds for
  $\OG(6,12)$.
\end{example}

The following example shows that \Conjecture{unique} does not hold for
arbitrary flag varieties.  It would be interesting to know if the
non-negative quantum deformations of any flag variety correspond to
points in a rational polyhedral cone.

\begin{example}\label{example:lg24}
  Let $X = \LG(2,4)$ be the Lagrangian Grassmannian of type C$_2$.
  Then $H^*(X,\Q)$ has a basis of Schubert classes
  $\{[X^0]=1, [X^1], [X^2], [X^3] = [\pt]\}$, where
  $[X^p] \in H^{2p}(X)$.  The multiplicative structure of $H^*(X,\Q)$
  is determined by $[X^1]^2 = 2[X^2]$ and $[X^1]\cdot[X^2] = [X^3]$.
  One can check that any quantum deformation
  $(\QH,\{\tau_0=1, \tau_1,\tau_2,\tau_3\})$ of $H^*(X,\Q)$ is given
  by
  \[
    \begin{split}
      \tau_1^2 &\ = \ 2\, \tau_2 \\
      \tau_1\, \tau_2 &\ = \ \tau_3 + (2a-b)\, q \\
      \tau_1\, \tau_3 &\ = \ b\, q\, \tau_1 \\
      \tau_2^2 &\ = \ a\, q\, \tau_1 \\
      \tau_2\, \tau_3 &\ = \ b\, q\, \tau_2 \\
      \tau_3^2 &\ = \ (2b-2a)\, q\, \tau_3 + (2ab-b^2)\, q^2
    \end{split}
  \]
  where $a, b \in \Q$ are constants.  It is non-negative if and only
  if $a \leq b \leq 2a$, and it is a change-of-basis deformation if
  and only if $a=1$.  In particular, we obtain a non-negative
  change-of-basis deformation of $H^*(X,\Q)$ whenever $a=1$ and
  $1 \leq b \leq 2$.  The quantum cohomology ring $\QH(X)$ corresponds
  to $a=b=1$ \cite{kresch.tamvakis:quantum}.
\end{example}

\section{Grassmannians}\label{sec:grassmannians}

Let $X = \Gr(m,n)$ be the Grassmannian of $m$-dimensional vector
subspaces in $\C^n$ and set $k=n-m$.  In this section we classify the
non-negative quantum deformations of the cohomology ring $H^*(X,\Q)$.
We start by recalling some basic definitions and facts about symmetric
functions.

\subsection{Partitions}

We will identify any \emph{partition}
$\la = (\la_1 \geq \la_2 \geq \dots \geq \la_\ell > 0)$ with its
\emph{Young diagram} of boxes, which has $\la_i$ boxes in row $i$.
Rows are numbered from top to bottom and are left aligned.  The
\emph{length} $\ell(\la)$ is the number of non-empty rows, and the
\emph{weight} $|\la| = \sum \la_i$ is the total number of boxes.  We
write $\la_i = 0$ for $i > \ell(\la)$.  The \emph{conjugate} Young
diagram $\la'$ is obtained by mirroring $\la$ in the diagonal, so that
rows and columns are interchanged.

Given an additional partition $\mu$, we write $\mu \subset \la$ if and
only if $\mu_i \leq \la_i$ for each $i$.  In this case $\la/\mu$
denotes the \emph{skew diagram} of boxes in $\la$ that are not in
$\mu$.  This skew diagram is a \emph{horizontal strip} if it has at
most one box in each column, and a \emph{vertical strip} if it has at
most one box in each row.  The \emph{outer rim} of a $\la$ is the skew
diagram of boxes in $\la$ that are not strictly north and strictly
west of other boxes in $\la$.  In other words, the box in row $i$ and
column $j$ belongs to the outer rim of $\la$ if and only if
$\la_{i+1} \leq j \leq \la_i$.

For integers $r, c \geq 0$ we let $(c^r) = (c, c, \dots, c)$ denote
the rectangular Young diagram with $r$ rows and $c$ columns.  Since
the Schubert varieties in $X = \Gr(m,n)$ can be indexed by partitions
$\la$ contained in the rectangle $(k^m)$ (see \Section{grass_cohom}),
we also denote this rectangle as $m \times k$.

Recall also that the \emph{lexicographic order} on integer sequences
is defined by $\mu <_\lex \la$ if and only if for some index $p$ we
have $\mu_i = \la_i$ for $1 \leq i < p$ whereas $\mu_p < \la_p$.  The
sequence $\mu$ is smaller than $\la$ in the \emph{degree-lexicographic
  order} if and only if $|\mu| < |\la|$, or $|\mu| = |\la|$ and
$\mu <_\lex \mu$.

\subsection{Symmetric functions}

Given a sequence $h = (h_1, h_2, \ldots)$ of elements in a commutative
ring $R$ and a partition
$\la = (\la_1 \geq \la_2 \geq \dots \geq \la_\ell \geq 0)$, define
\[
  \Delta_\la(h) = \det( h_{\la_i+j-i} )_{\ell \times \ell} \,.
\]
Here we set $h_0 = 1$ and $h_i = 0$ for $i < 0$.  Let
$e = (e_1, e_2, \ldots)$ be the dual sequence defined by
$e_p = \Delta_{(1^p)}(h) = \det(h_{1+j-i})_{p \times p}$.
Equivalently, we can define $e_p$ recursively by the formula
\[
  e_p = \sum_{i=1}^p (-1)^{i-1} h_i e_{p-i} \,.
\]
We need the following facts about Schur polynomials.  Proofs can be
found in e.g. \cite[Ch.~I]{macdonald:symmetric*1} or \cite[\S 2.2, \S
6.1]{fulton:young}.

\begin{fact}\label{fact:pieri}
  We have $e_p\, \Delta_\la(h) = \sum_\mu \Delta_\mu(h)$,
  where the sum is over all partitions $\mu$ obtained by adding a
  vertical strip of $p$ boxes to $\la$.
\end{fact}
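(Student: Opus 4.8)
The statement to be proved is the Pieri rule $e_p \, \Delta_\la(h) = \sum_\mu \Delta_\mu(h)$, where $\mu$ runs over partitions obtained from $\la$ by adding a vertical strip of $p$ boxes. Since this is a formal identity in the commutative ring $R$ generated by the symbols $h_1, h_2, \dots$ (with $h_0 = 1$, $h_i = 0$ for $i < 0$), it suffices to prove it universally, i.e.\ when the $h_i$ are algebraically independent, or equivalently when $h_i$ is the $i$-th complete homogeneous symmetric function in a large number of variables $x_1, \dots, x_N$; then $\Delta_\la(h)$ is the Schur polynomial $s_\la(x)$ by the Jacobi--Trudi formula, $e_p$ is the $p$-th elementary symmetric function, and the claim becomes the classical Pieri identity $e_p \cdot s_\la = \sum_{\mu} s_\mu$ over vertical strips. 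The plan is to cite this as standard (it is exactly what the references \cite[Ch.~I]{macdonald:symmetric*1}, \cite[\S 2.2, \S 6.1]{fulton:young} supply), but, since the excerpt is self-contained and the determinantal definition is the one in force here, I would also sketch a direct determinantal argument so the reader need not leave the page.

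For the direct argument, I would proceed as follows. First reduce to the case $\ell(\la) = \ell$ fixed and work with the $\ell \times \ell$ matrix $H = (h_{\la_i + j - i})$; adding trailing zeros to $\la$ does not change $\Delta_\la(h)$, so WLOG all partitions in sight have exactly $\ell$ (weakly positive) parts. Next, use the generating-function relation between $e$ and $h$: the recursion $e_p = \sum_{i=1}^p (-1)^{i-1} h_i e_{p-i}$ is equivalent to $E(t) H(t) = 1$ where $H(t) = \sum_{i \ge 0} h_i t^i$ and $E(t) = \sum_{i \ge 0} (-1)^i e_i t^i$. The cleanest route is then the \emph{dual Jacobi--Trudi identity}: expand $e_p \, \Delta_\la(h)$ by multilinearity of the determinant across the rows after substituting $h_j = \sum_i (-1)^{i-1} e_i h_{j-i}$, or more efficiently, observe that $\Delta_\la(h) = \det(e_{\la'_i + j - i})$ (the conjugate Jacobi--Trudi form) and reduce the vertical-strip Pieri rule to the horizontal-strip Pieri rule under conjugation $\la \leftrightarrow \la'$, $h \leftrightarrow e$. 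Thus it suffices to prove $e_p \, \det(e_{\nu_i + j - i})_{\ell' \times \ell'} = \sum_\nu \det(e_{\nu_i + j - i})$ over horizontal strips $\nu$, i.e.\ the $h$-Pieri rule with the roles of $h$ and $e$ swapped — a symmetric situation.

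To prove the $e_p$-times-column-vector step cleanly, I would use the standard Lindström--Gessel--Viennot or Lascoux--style column-insertion argument at the level of determinants: multiply the first column of the Jacobi--Trudi matrix by the generating series and track the cancellation. Concretely, the identity $e_p = \sum (-1)^{i-1} h_i e_{p-i}$ lets one write $e_p \cdot \Delta_\la(h)$ as an alternating sum of determinants $\Delta$ of matrices obtained by bordering $H$, and a telescoping/sign-cancellation argument (collapsing adjacent equal rows, which kill the determinant) leaves exactly the determinants $\Delta_\mu(h)$ indexed by the $\mu \supset \la$ with $\mu/\la$ a vertical strip of size $p$, each with coefficient $1$. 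The main obstacle is purely bookkeeping: organizing the sign cancellations so that every term not corresponding to a legal vertical strip is matched with its negative, and checking that the surviving terms each appear exactly once. This is the routine-but-fiddly heart of the argument; everything else (reduction to symmetric functions, conjugation symmetry, the $E(t)H(t)=1$ relation) is formal. Given that the paper explicitly points to \cite{macdonald:symmetric*1, fulton:young} for proofs, I would keep the write-up to the reduction plus a one-line pointer rather than reproducing the full cancellation combinatorics.
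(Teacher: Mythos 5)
The paper itself does not prove \hyperref[fact:pieri]{Fact~\ref*{fact:pieri}}: it simply cites \cite[Ch.~I]{macdonald:symmetric*1} and \cite[\S 2.2, \S 6.1]{fulton:young} and treats the statement as standard. Your proposal reaches the same bottom line — reduce to the universal case (the $h_i$ algebraically independent, i.e.\ complete homogeneous symmetric functions), invoke the Jacobi--Trudi formula and Pieri rule, and defer to the references — so it matches the paper in approach. The additional determinantal sketch you outline (conjugating via $\Delta_\la(h)=\Delta_{\la'}(e)$, using $H(t)E(t)=1$ with your sign convention, and a row-cancellation/telescoping argument) is a correct and reasonable route if one wanted a self-contained proof, but it is not what the paper does, and you correctly judge that the citation suffices in context.
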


\begin{fact}\label{fact:conjugate}
  We have $\Delta_\la(e) = \Delta_{\la'}(h)$, where $\la'$ is the
  conjugate partition of $\la$.
\end{fact}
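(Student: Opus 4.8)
\emph{Proof strategy.} The plan is to realize $\Delta_\la(h)$ and $\Delta_\la(e)$ as complementary minors of a pair of mutually inverse matrices and to deduce the identity from Jacobi's formula for the minors of an inverse matrix. Fix $\ell\ge\ell(\la)$ and $m\ge\la_1$ (recall that $\Delta_\mu$ is unchanged by adding trailing zeros to $\mu$), set $N=\ell+m$, and form the $N\times N$ upper unitriangular matrices
\[
  A=\big(h_{j-i}\big)_{1\le i,j\le N},\qquad B=\big((-1)^{j-i}e_{j-i}\big)_{1\le i,j\le N},
\]
using $h_0=e_0=1$ and $h_r=e_r=0$ for $r<0$. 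The first step is the identity $AB=I_N$: the $(i,k)$ entry of $AB$ is $\sum_j(-1)^{k-j}h_{j-i}e_{k-j}$, and writing $r=j-i$, $p=k-i$ turns it into $(-1)^p\sum_{r=0}^p(-1)^r h_r e_{p-r}$, which by the defining recursion $e_p=\sum_{i\ge1}(-1)^{i-1}h_i e_{p-i}$ equals $1$ when $p=0$ and $0$ when $p\ge1$. Hence $\det A=\det B=1$ and $B=A^{-1}$.

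Next I would identify the minors. Put $R=\{\,m+a-\la_a:1\le a\le\ell\,\}$ and $C=\{m+1,\dots,N\}$; since $a\mapsto a-\la_a$ is strictly increasing and $m+1-\la_1\ge1$, $m+\ell-\la_\ell\le N$, these are $\ell$-element subsets of $\{1,\dots,N\}$, and a direct computation gives $\det A[R\mid C]=\Delta_\la(h)$, while pulling the scalar $(-1)^{\la_a+b-a}$ out of the $(a,b)$ entry of $B[R\mid C]$ (it splits as a row factor times a column factor) gives $\det B[R\mid C]=(-1)^{|\la|}\Delta_\la(e)$. Jacobi's complementary minor formula for $B=A^{-1}$ now reads
\[
  \det B[R\mid C]=(-1)^{\sigma(R)+\sigma(C)}\det A\big[\,C^{c}\mid R^{c}\,\big],
\]
with $\sigma(S)=\sum_{s\in S}s$ and complements taken inside $\{1,\dots,N\}$. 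A short computation gives $\sigma(R)+\sigma(C)\equiv|\la|\pmod{2}$, so the two sign prefactors cancel and it remains to show $\det A[\,C^{c}\mid R^{c}\,]=\Delta_{\la'}(h)$.

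Here $C^{c}=\{1,\dots,m\}$, and the crux is the classical correspondence between a partition and its conjugate at the level of such index sets ("Maya diagrams"): explicitly, $\{1,\dots,N\}\ssm R=\{\,\la'_{m+1-d}+d:1\le d\le m\,\}$. Granting this, $\det A[\{1,\dots,m\}\mid\{1,\dots,N\}\ssm R]=\det\big(h_{\la'_{m+1-e}+e-d}\big)_{1\le d,e\le m}$; reversing the orders of the rows and of the columns (two factors $(-1)^{\binom{m}{2}}$ that cancel) and then transposing carries this determinant to $\det\big(h_{\la'_i+j-i}\big)_{1\le i,j\le m}=\Delta_{\la'}(h)$, completing the proof.

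The matrix identity $AB=I$ and Jacobi's formula are routine; the step I expect to require genuine care is the combinatorial lemma $\{1,\dots,N\}\ssm R=\{\la'_{m+1-d}+d\}$ — that complementation of these index sets effects conjugation of partitions — together with verifying that every sign introduced along the way cancels. As an alternative route that avoids the minor calculus: since the claimed identity is a universal polynomial identity in the sequence $(h_i)$, it suffices to prove it after specializing each $h_i$ to the $i$-th complete homogeneous symmetric function, so that $e_i$ becomes the $i$-th elementary symmetric function and $\Delta_\la(h)=s_\la$; the statement then becomes the dual Jacobi--Trudi (N\"agelsbach--Kostka) identity, which follows from a Lindstr\"om--Gessel--Viennot lattice-path count applied to the two Jacobi--Trudi determinants.
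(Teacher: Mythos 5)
Your proof is correct, and I checked all the details: the identity $AB=I_N$ follows exactly from the recursion $e_p=\sum_{i\ge1}(-1)^{i-1}h_ie_{p-i}$, the minor $\det A[R\mid C]$ is indeed $\Delta_\la(h)$ since its $(a,b)$ entry is $h_{\la_a+b-a}$, the sign bookkeeping $\det B[R\mid C]=(-1)^{|\la|}\Delta_\la(e)$ and $\sigma(R)+\sigma(C)\equiv|\la|\pmod 2$ is right, and the complementation $\{1,\dots,N\}\ssm R=\{\la'_{m+1-d}+d\}$ is the standard Maya-diagram description of conjugation. Note that the paper does not give its own proof of this fact but simply cites Macdonald, Ch.~I and Fulton, \emph{Young Tableaux}; your Jacobi-complementary-minor argument is precisely the proof given in Macdonald (where it appears as the equality of the two determinantal formulas for Schur functions), so you have in effect reproduced the cited source rather than found a different route, and the alternative you sketch (specialize $h_i$ to complete homogeneous symmetric functions and invoke the dual Jacobi--Trudi/N\"agelsbach--Kostka identity via lattice paths) is the argument in Fulton.
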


Define the polynomial ring $S = \Q[c_1,\dots,c_m]$, with the grading
defined by $\deg(c_i) = i$.  Let
$c = (c_1, c_2, \dots c_m, 0, 0, \dots)$ be the sequence of its
variables, with $c_i = 0$ for $i > m$.  Let
$\sigma = (\sigma_1, \sigma_2, \dots)$ be the dual sequence defined by
$\sigma_p = \Delta_{(1^p)}(c) = \det(c_{1+j-i})_{p \times p}$.  For
each partition $\la$ we set $\sigma_\la = \Delta_\la(\sigma)$.  This
is a homogeneous element of $S$ of degree $|\la|$.

\begin{lemma}\label{lemma:Sbasis}
  The set $\{ \sigma_\la \mid \ell(\la) \leq m \}$ is a $\Q$-basis of
  $S$, and we have $\sigma_\la = 0$ whenever $\ell(\la) > m$.
\end{lemma}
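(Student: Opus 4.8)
The claim has two halves: that the $\sigma_\la$ with $\ell(\la)\le m$ span $S$ and are linearly independent, and that $\sigma_\la=0$ when $\ell(\la)>m$. I would handle the vanishing statement first, since it feeds into the basis argument. The key point is that $\sigma=(\sigma_1,\sigma_2,\dots)$ is, up to sign, the dual sequence to $c=(c_1,\dots,c_m,0,0,\dots)$ in the sense of \Fact{conjugate}: indeed $\sigma_p=\Delta_{(1^p)}(c)=e_p$ in the notation of the symmetric-function subsection with $h=c$. Applying \Fact{conjugate} with the roles of $e$ and $h$ played by $\sigma$ and $c$, we get $\sigma_\la=\Delta_\la(\sigma)=\Delta_{\la'}(c)$. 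Now if $\ell(\la)>m$ then $\la'_1=\ell(\la)>m$, so the conjugate partition $\la'$ has first part exceeding $m$; the determinant $\Delta_{\la'}(c)=\det(c_{\la'_i+j-i})$ then has a first row whose entries are all $c_{\la'_1+j-1}$ with $\la'_1+j-1\ge \la'_1>m$, hence all zero since $c_i=0$ for $i>m$. (One should double-check the indexing: the top-left entry is $c_{\la'_1}$, and every entry in the top row has subscript $\ge\la'_1-\,(\text{something})$; the cleanest route is to note $\la'_1>m$ forces $c_{\la'_1}=0$ and more generally that column-1 / row-1 analysis kills the determinant, or simply invoke the standard fact that a Schur-type determinant $\Delta_\mu(c)$ vanishes when $\mu_1>m$ because the generating series $\sum c_i t^i$ is a polynomial of degree $\le m$.) This gives $\sigma_\la=0$ for $\ell(\la)>m$.

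For the basis statement, I would argue by a triangularity/dimension count. The ring $S=\Q[c_1,\dots,c_m]$ is a graded $\Q$-algebra whose degree-$d$ piece $S_d$ has dimension equal to the number of partitions of $d$ into parts of size $\le m$ — equivalently, by conjugation, the number of partitions of $d$ with at most $m$ rows. So it suffices to show that the $\sigma_\la$ with $\ell(\la)\le m$ are linearly independent, as their count in each degree already matches $\dim S_d$; spanning then follows for free. For independence I would establish that $\sigma_\la$, expanded in the monomial (or in a suitable "standard monomial") basis of $S$, has a distinguished leading term depending bijectively on $\la$. Concretely: expanding the determinant $\Delta_\la(\sigma)=\det(\sigma_{\la_i+j-i})$ along the main diagonal, the diagonal term is $\sigma_{\la_1}\sigma_{\la_2}\cdots\sigma_{\la_\ell}$, and from the recursion $\sigma_p=\sum_i(-1)^{i-1}c_i\sigma_{p-i}$ (with $\sigma_1=c_1$) one sees $\sigma_p = c_1^{\,p}+(\text{lower in lex on the exponent vector of }c)$. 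Hence $\sigma_\la$ contains the monomial corresponding to the conjugate partition $\la'$ (read as an exponent vector in $c_1,c_2,\dots$) with coefficient $1$, and all other monomials appearing are strictly smaller in an appropriate degree-lex order; since $\la\mapsto\la'$ is a bijection on partitions with $\ell(\la)\le m$, the matrix expressing the $\sigma_\la$ in the monomial basis is unitriangular, so they are independent.

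The main obstacle is pinning down exactly which leading-term/triangularity statement makes the unitriangular matrix argument go through cleanly — the naive "diagonal term of the determinant dominates" claim needs the off-diagonal permutation terms $\pm\prod_j\sigma_{\la_{\pi(j)}+j-\pi(j)}$ to be controlled, and since some of those subscripts can exceed the diagonal ones this is not automatic from a crude degree bound. The honest fix is to use \Fact{conjugate} to rewrite $\sigma_\la=\Delta_{\la'}(c)$ and then invoke the classical Jacobi–Trudi / Schur-polynomial theory: $\Delta_\mu(c)$ for $\mu_1\le m$ is exactly the Schur polynomial $s_\mu$ in $m$ variables under the standard identification $c_i\leftrightarrow e_i$, and it is a standard fact that $\{s_\mu : \mu_1\le m\}=\{s_\mu:\ell(\mu')\le m\}$ is a $\Q$-basis of the polynomial ring in $e_1,\dots,e_m$ — which is precisely $S$. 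So the cleanest write-up reduces everything to \Fact{conjugate} plus the cited results of \cite[Ch.~I]{macdonald:symmetric*1}, \cite[\S2.2,\S6.1]{fulton:young}, rather than reproving triangularity from scratch.
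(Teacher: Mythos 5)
Your proof is essentially the same as the paper's: both use \Fact{conjugate} to rewrite $\sigma_\la = \Delta_{\la'}(c)$, both derive the vanishing for $\ell(\la) > m$ from the top row of that determinant being zero (since every entry there is $c_{\la'_1 + j - 1}$ with $\la'_1 = \ell(\la) > m$), and both finish with a triangularity argument relating the $\sigma_\la$ to the monomial basis of $S = \Q[c_1,\dots,c_m]$.

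Where you diverge is in the details of the triangularity. Your first attempt — read off the diagonal term $\sigma_{\la_1}\cdots\sigma_{\la_\ell}$ from $\Delta_\la(\sigma)$ and use $\sigma_p = c_1^p + (\text{lower})$ — does not cleanly produce a leading monomial that distinguishes partitions, since the diagonal term alone would give $c_1^{|\la|}$ as the top term for every $\la$ of the same weight; you are right to flag this as a problem, but the fix is simpler than you suggest. The paper works directly with $\Delta_{\la'}(c)$ and orders monomials $c_1^{p_1}\cdots c_m^{p_m}$ lexicographically on the \emph{reversed} exponent vector $(p_m, p_{m-1}, \dots, p_1)$; in that order the initial term of $\Delta_{\la'}(c)$ is exactly $c_1^{\la_1-\la_2} c_2^{\la_2-\la_3}\cdots c_m^{\la_m}$, with coefficient $1$. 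Since $\la \mapsto (\la_1-\la_2, \dots, \la_m)$ is a bijection from partitions with $\ell(\la) \le m$ onto exponent vectors, unitriangularity (and hence both independence and spanning, by your dimension count) follows without appealing to external Schur-function theory. Also note a small slip in your write-up: the leading monomial is not ``$\la'$ read as an exponent vector'' but the difference vector $(\la_1-\la_2,\ldots,\la_m)$; the two are related but not the same thing. Your fallback of citing \cite[Ch.~I]{macdonald:symmetric*1} is certainly valid, just less self-contained than the paper's two-line observation about the lex order.
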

\begin{proof}
  By \Fact{conjugate} we have $\sigma_\la = \Delta_{\la'}(c)$.  If
  $\ell(\la) > m$, then all entries in the top row of the determinant
  defining $\Delta_{\la'}(c)$ are zero.  Otherwise the initial term of
  $\Delta_{\la'}(c)$ is
  $c_1^{\la_1-\la_2} c_2^{\la_2-\la_3} \cdots c_m^{\la_m}$, where the
  monomials $c_1^{p_1} c_2^{p_2} \dots c_m^{p_m}$ of $S$ are ordered
  according to the lexicographic order on the exponent vectors
  $(p_m, p_{m-1}, \dots, p_1)$.  The lemma follows from this.
\end{proof}

\begin{lemma}\label{lemma:Hbasis}
  The set $\{ \sigma_\la \mid \la \subset m \times k \}$ is a
  $\Q$-basis of the ring
  $S/\langle \sigma_{k+1}, \dots, \sigma_n \rangle$.
\end{lemma}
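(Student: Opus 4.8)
The plan is to deduce everything from \Lemma{Sbasis}. Write $R = S/\langle \sigma_{k+1},\dots,\sigma_n\rangle$, and split the $\Q$-basis $\{\sigma_\la : \ell(\la)\le m\}$ of $S$ into the two parts $\mathcal{B}_1 = \{\sigma_\la : \la\subset m\times k\}$ and $\mathcal{B}_2 = \{\sigma_\la : \ell(\la)\le m,\ \la_1 > k\}$. The whole statement then reduces to the single identity
\[
  \langle \sigma_{k+1},\dots,\sigma_n\rangle \ = \ \Span_\Q \mathcal{B}_2 \,,
\]
because $\{\sigma_\la : \ell(\la)\le m\} = \mathcal{B}_1 \sqcup \mathcal{B}_2$ gives a direct sum decomposition $S = \Span_\Q\mathcal{B}_1 \oplus \Span_\Q\mathcal{B}_2$; if $\Span_\Q\mathcal{B}_2$ equals the defining ideal, then $R \cong \Span_\Q\mathcal{B}_1$ and the residues of the elements of $\mathcal{B}_1$ form a $\Q$-basis of $R$.

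First I would prove $\Span_\Q\mathcal{B}_2 \subseteq \langle\sigma_{k+1},\dots,\sigma_n\rangle$. The key preliminary observation is that $\sigma_p$ lies in this ideal for \emph{every} $p > k$, not just for $k < p \le n$. This follows by induction on $p$ from the recursion $\sigma_p = \sum_{i=1}^{m}(-1)^{i-1} c_i\, \sigma_{p-i}$ — the specialization to the sequence $c$ of the defining recursion of a dual sequence, using $c_i = 0$ for $i > m$ — since for $p > n$ every index $p-i$ with $1\le i\le m$ satisfies $p - i \ge p - m > n - m = k$. Now if $\la$ is any partition with $\la_1 > k$, expanding the determinant $\sigma_\la = \det(\sigma_{\la_i+j-i})_{\ell\times\ell}$ along its first row — whose entries $\sigma_{\la_1},\sigma_{\la_1+1},\dots,\sigma_{\la_1+\ell-1}$ all have index $\ge \la_1 > k$ — exhibits $\sigma_\la$ as an element of the ideal.

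For the reverse inclusion I would write $\langle\sigma_{k+1},\dots,\sigma_n\rangle = \sum_{p=1}^{m} \sigma_{k+p}\, S$ and use \Lemma{Sbasis} to reduce to showing $\sigma_{k+p}\,\sigma_\mu \in \Span_\Q\mathcal{B}_2$ for all $1\le p\le m$ and all partitions $\mu$ with $\ell(\mu)\le m$. Combining \Fact{pieri} with \Fact{conjugate} yields the Pieri rule
\[
  \sigma_{k+p}\,\sigma_\mu \ = \ \sum_\nu \sigma_\nu \,,
\]
the sum over partitions $\nu$ obtained from $\mu$ by adding a horizontal strip of $k+p$ boxes. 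If $\ell(\nu) > m$ then $\sigma_\nu = 0$ by \Lemma{Sbasis}; otherwise I claim $\nu_1 > k$, so $\sigma_\nu \in \mathcal{B}_2$. Indeed, the horizontal-strip condition $\nu_{i+1}\le\mu_i$ gives $\sum_{i\ge 2}(\nu_i-\mu_i) \le \sum_{i\ge 2}(\mu_{i-1}-\mu_i) \le \mu_1$, so that $\nu_1 - \mu_1 = (k+p) - \sum_{i\ge 2}(\nu_i-\mu_i) \ge (k+p) - \mu_1$, i.e.\ $\nu_1 \ge k+p > k$.

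I do not expect any deep obstacle: the argument is essentially bookkeeping organized around \Lemma{Sbasis} and the Pieri and conjugation facts. The only point that requires genuine care is the box-count in the last step, namely that multiplying by $\sigma_{k+p}$ can never produce a Schur function $\sigma_\nu$ with first part $\le k$ (apart from those that vanish for length reasons); everything else is formal.
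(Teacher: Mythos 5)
Your proposal is correct and follows the same skeleton as the paper's proof: decompose the $\Q$-basis of $S$ from \Lemma{Sbasis} into $\mathcal{B}_1 = \{\sigma_\la : \la\subset m\times k\}$ and $\mathcal{B}_2 = \{\sigma_\la : \ell(\la)\le m,\ \la_1>k\}$, and prove that the ideal $I = \langle\sigma_{k+1},\dots,\sigma_n\rangle$ equals $\Span_\Q\mathcal{B}_2$. Your argument for $\Span_\Q\mathcal{B}_2\subseteq I$ is exactly the paper's: the recursion $\sigma_p=\sum_{i=1}^m(-1)^{i-1}c_i\sigma_{p-i}$ gives $\sigma_p\in I$ for all $p>k$ by induction, and expanding $\Delta_\la(\sigma)$ along the top row does the rest. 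The one place you diverge is the containment $I\subseteq\Span_\Q\mathcal{B}_2$. The paper applies \Fact{pieri} as stated (the vertical-strip rule for $c_p\,\sigma_\la$): elements of $I$ are $\Q$-combinations of $c_{p_1}\cdots c_{p_r}\sigma_{k+j}$, and since adding a vertical strip can only increase the first part, every surviving term has $\la_1\ge k+j>k$ (terms with more than $m$ rows vanish by \Lemma{Sbasis}). You instead route through \Fact{conjugate} to get the horizontal-strip Pieri rule for $\sigma_{k+p}\,\sigma_\mu$ and then verify $\nu_1>k$ by a telescoping box count. Both are sound; the paper's version makes the monotonicity of $\la_1$ immediate and avoids the inequality, while yours treats a single product $\sigma_{k+p}\sigma_\mu$ at a time, which is a bit more self-contained but slightly fiddlier. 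The substance of the argument is the same.
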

\begin{proof}
  By \Lemma{Sbasis} it is enough to show that
  $I = \langle \sigma_{k+1}, \sigma_{k+2}, \dots, \sigma_n \rangle$
  has a basis consisting of all elements $\sigma_\la$ for which
  $\ell(\la) \leq m$ and $\la_1 > k$.  \Fact{pieri} implies that $I$
  is contained in the span of these basis elements.  By using the
  identity $\sigma_p = \sum_{i=1}^m (-1)^{i-1} c_i \sigma_{p-i}$, it
  follows by induction on $p$ that $\sigma_p \in I$ for all $p > k$.
  Therefore all elements in the top row of the determinant defining
  $\sigma_\la$ are contained in $I$ when $\la_1 > k$.
\end{proof}

\begin{cor}\label{cor:nzd}
  The element $\sigma_n$ is a non-zero divisor in
  $S/\langle \sigma_{k+1}, \dots, \sigma_{n-1} \rangle$.
\end{cor}
\begin{proof}
  Set
  $J = \langle \sigma_{k+1}, \dots, \sigma_{n-1} \rangle \subset S$
  and $K = \{ f \in S \mid \sigma_n f \in J \}$.  We must show that
  $K = J$.  Otherwise choose a prime ideal $P \subset S$ such that
  $(K/J)_P \neq 0$.  Then $P$ contains
  $I = \langle \sigma_{k+1}, \sigma_{k+2}, \dots, \sigma_n \rangle$,
  and $S_P/I_P$ has Krull dimension zero by \Lemma{Hbasis}.  It
  follows that the generators of $I$ form a regular sequence in $S_P$
  \cite[Thm.~17.4]{matsumura:commutative*1},
  which contradicts that $\sigma_n$ is a zero-divisor in $S_P/J_P$.
\end{proof}

\subsection{Cohomology of Grassmannians}\label{sec:grass_cohom}

We summarize the main facts about the cohomology of Grassmannians.
More details and proofs can be found in e.g.\ \cite[\S
9.4]{fulton:young} or \cite[\S 3]{manivel:symmetric}.

Each partition $\la = (\la_1 \geq \dots \geq \la_m)$ contained in the
rectangle $m \times k$ defines a Schubert variety in $X = \Gr(m,n)$
given by
\[
  X^\la = \{ V \in X \mid \dim(V \cap F^{k+i-\la_i}) \geq i
  ~\forall 1 \leq i \leq m \} \,,
\]
where $F^{k+i-\la_i}$ denotes the span of the last $k+i-\la_i$
coordinate basis vectors in $\C^n$.  We have
$\codim(X^\la,X) = |\la|$.  With the notation of the introduction we
have $X = G/P$, where $G = \GL(n,\C)$ and $P$ is the stabilizer of the
span of the first $m$ basis vectors in $\C^n$.  If we use the maximal
torus $T$ of diagonal matrices and the Borel subgroup $B$ of upper
triangular matrices, and identify the Weyl group $W$ with the subgroup
of permutation matrices in $G$, then the set $W^P$ consists of all
permutations $w = (w_1,w_2,\dots,w_n)$ with descent only at position
$m$, i.e.\ $w_i < w_{i+1}$ for $i \neq m$.  We then have $X^w = X^\la$
where $\la = (w_m-m, \dots, w_2-2, w_1-1)$.

The Schubert classes $[X^\la]$ form a basis of $H^*(X,\Q)$ as a vector
space over $\Q$.  The special Schubert classes $[X^{(1^p)}]$ are the
Chern classes of the dual of the tautological subbundle
$\cS \subset X \times \C^n$ of rank $m$.  The classical Pieri formula
states that multiplication with these classes in $H^*(X,\Q)$ is
determined by
\begin{equation}\label{eqn:pieri}
  [X^{(1^p)}] \cdot [X^\la] = \sum_\mu [X^\mu] \,,
\end{equation}
where the sum is over all partitions $\mu \subset m \times k$ obtained
by adding a vertical strip of $p$ boxes to $\la$.

\begin{lemma}\label{lemma:cohom_present}
  The linear map
  $S/\langle \sigma_{k+1},\dots,\sigma_n \rangle \to H^*(X,\Q)$
  defined by $\sigma_\la \mapsto [X^\la]$ is an isomorphism of graded
  rings.
\end{lemma}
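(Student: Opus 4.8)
The strategy is to produce a map in both directions and check they are mutually inverse — but in fact it suffices to build one well-defined ring homomorphism and then show it is an isomorphism by a dimension count, since both sides are finite-dimensional $\Q$-vector spaces of the same dimension. First I would construct a ring homomorphism $S \to H^*(X,\Q)$. The ring $S = \Q[c_1,\dots,c_m]$ is a free polynomial ring, so a ring map out of it is determined by choosing images of the generators $c_i$; I send $c_i$ to the $i$-th Chern class $[X^{(1^i)}]$ of the dual tautological subbundle $\cS^\vee$. Under this map, the element $\sigma_p = \Delta_{(1^p)}(c) = \det(c_{1+j-i})_{p\times p}$ maps to $\det([X^{(1^{1+j-i})}])_{p\times p}$, which by the dual Jacobi–Trudi identity (a consequence of \Fact{conjugate}, applied with $h_i \leftrightarrow c_i$) is the Schur polynomial in the Chern roots indexed by the single row $(p)$; geometrically this is the special Schubert class $[X^{(p)}]$ associated to the quotient bundle, i.e.\ $\sigma_p \mapsto [X^{(p)}]$. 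More generally $\sigma_\la = \Delta_\la(\sigma) \mapsto [X^\la]$, by the same Jacobi–Trudi manipulation.

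**Passing to the quotient.** Next I must check that $\sigma_{k+1},\dots,\sigma_n$ lie in the kernel, so that the map factors through $S/\langle \sigma_{k+1},\dots,\sigma_n\rangle$. By the previous paragraph $\sigma_p \mapsto [X^{(p)}]$, and the Schubert class $[X^{(p)}]$ vanishes in $H^*(X,\Q)$ precisely when $p > k$, because the partition $(p)$ does not fit in the rectangle $m\times k$ (equivalently, the quotient bundle $\C^n/\cS$ has rank $k$, so its Chern classes vanish above degree $k$, and $[X^{(p)}]$ is the $p$-th Chern class of that bundle). Hence $\sigma_{k+1},\dots,\sigma_n$ all map to $0$, and we obtain an induced graded ring homomorphism $\bar\phi : S/\langle\sigma_{k+1},\dots,\sigma_n\rangle \to H^*(X,\Q)$ with $\sigma_\la \mapsto [X^\la]$ for every $\la$.

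**Bijectivity.** Finally, \Lemma{Hbasis} says $\{\sigma_\la : \la \subset m\times k\}$ is a $\Q$-basis of the source, and the classical fact recalled in \Section{grass_cohom} says $\{[X^\la] : \la \subset m\times k\}$ is a $\Q$-basis of the target. Since $\bar\phi$ sends the first basis bijectively onto the second, it is a linear isomorphism; being also a ring homomorphism, it is an isomorphism of graded rings. This completes the proof.

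**Expected main obstacle.** The only genuinely delicate point is the identification $\sigma_\la \mapsto [X^\la]$, i.e.\ matching the determinantal definition of $\sigma_\la$ (built from the $\sigma_p$, which are themselves $m\times m$-style determinants in the $c_i$) with the geometric Giambelli formula for $[X^\la]$ in terms of the special classes $[X^{(p)}]$. The cleanest route is to note that the two-step determinantal passage $c \rightsquigarrow \sigma \rightsquigarrow \sigma_\la$ is exactly the classical ``$h \leftrightarrow e$ then Jacobi–Trudi'' used to express a Schur polynomial $s_\la$ via complete homogeneous symmetric functions, so that $\sigma_\la$ is literally $s_\la$ of the Chern roots of $\cS^\vee$, which is the defining expression for $[X^\la]$; this is precisely the content of \Fact{pieri} and \Fact{conjugate}. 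Everything else is bookkeeping with gradings and the already-proved lemmas.
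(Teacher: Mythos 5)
Your proof is correct, but it takes a genuinely different route from the paper's. You first construct a ring homomorphism $S \to H^*(X,\Q)$ via the universal property of the polynomial ring (sending $c_i \mapsto [X^{(1^i)}]$), and then verify that it sends $\sigma_\la$ to $[X^\la]$. That second step is precisely the classical Giambelli formula --- the identification of Schubert classes with Schur polynomials in the Chern roots, or equivalently the facts that $c_p$ of the quotient bundle equals $[X^{(p)}]$ and that $[X^\la]$ equals the determinant $\det([X^{(\la_i+j-i)}])$. This is true, but it is an extra classical input that the paper deliberately does not invoke: in \Section{grass_cohom} the paper records only the Pieri formula \eqn{pieri} (and the identification $c_p(\cS^\vee)=[X^{(1^p)}]$), and its proof of the lemma checks multiplicativity directly by computing $c_p\cdot\sigma_\la$ on both sides, using \Fact{pieri} on the symmetric-function side and \eqn{pieri} on the geometric side, and then appealing to the fact that $c_1,\dots,c_m$ generate $S$. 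The paper's argument is thus more economical in its geometric inputs (Pieri only, no Giambelli), while yours is the ``standard'' presentation-of-cohomology argument and is somewhat cleaner to state; both are valid. One small inaccuracy in your writeup worth flagging: the equality of $[X^\la]$ with the Schur polynomial in Chern roots is not ``the defining expression for $[X^\la]$'' --- the Schubert class is defined geometrically via the Schubert variety, and the Schur-polynomial description is the content of the Giambelli theorem, so you should cite it as such rather than as a definition, and similarly the step $\sigma_p\mapsto[X^{(p)}]$ needs the geometric input $c_p(Q)=[X^{(p)}]$, which the paper has not stated (it only states $c_p(\cS^\vee)=[X^{(1^p)}]$).
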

\begin{proof}
  Since $c_p = \sigma_{(1^p)}$ is mapped to $[X^{(1^p)}]$, it follows
  from \Fact{pieri} and the Pieri formula \eqn{pieri} that any product
  $c_p \sigma_\la$ is mapped to $[X^{(1^p)}] \cdot [X^\la]$.  Since
  $S$ is generated by $c_1,\dots,c_m$, this implies that the map is a
  ring homomorphism.
\end{proof}

\subsection{Quantum deformations of Grassmannians}%
\label{sec:qdeform_construct}

The quantum cohomology ring $\QH(X)$ contains a single deformation
parameter $q$ of degree $n$.  We next give a combinatorial
construction of the non-negative quantum deformations of $H^*(X,\Q)$.

Given any rational number $\al \in \Q$, define the ring
\[
  \QH_\al = S[q]/\langle \sigma_{k+1}, \dots, \sigma_{n-1}, \sigma_n +
  (-1)^m \al q \rangle \,.
\]
This ring is a graded $\Q[q]$-algebra, where the grading is defined by
$\deg(q) = n$ and $\deg(c_p) = p$ for $1 \leq p \leq m$.
\Lemma{cohom_present} implies that the assignments
$\sigma_\la \mapsto [X^\la]$ and $q \mapsto 0$ define a ring
homomorphism $\QH_\al \to H^*(X,\Q)$, and we show in
\Corollary{qh_basis} that $\QH_\al$ is a quantum deformation of
$H^*(X,\Q)$ with basis $\{ \sigma_\la \mid \la \subset m \times k \}$.
We first show that $\QH_\al$ satisfies the following rescaled version
of Bertram's quantum Pieri formula \cite{bertram:quantum}.

\begin{prop}\label{prop:qpieri}
  {\rm(a)}
  For $1 \leq p \leq m$ and $\la \subset m \times k$ we have in
  $\QH_\al$ that
  \[
    c_p\, \sigma_\la = \sum_\mu \sigma_\mu + \sum_\nu \al q\, \sigma_\nu \,.
  \]
  The first sum is over all partitions $\mu \subset m \times k$ that
  can be obtained by adding a vertical strip of $p$ boxes to $\la$.
  The second sum is empty unless $\la_1 = k$, in which case it is over
  all partitions $\nu$ obtained by removing $n-p$ boxes from the outer
  rim of $\la$, with at least one box removed from each
  column.\smallskip

  \noin{\rm(b)} For $1 \leq p \leq k$ and $\la \subset m \times k$ we
  have in $\QH_\al$ that
  \[
    \sigma_p\, \sigma_\la = \sum_\mu \sigma_\mu + \sum_\nu \al q\,
    \sigma_\nu \,.
  \]
  The first sum is over all partitions $\mu \subset m \times k$ that
  can be obtained by adding a horizontal strip of $p$ boxes to $\la$.
  The second sum is empty unless $\la_m \neq 0$, in which case it is
  over all partitions $\nu$ obtained by removing $n-p$ boxes from the
  outer rim of $\la$, with at least one box removed from each row.
\end{prop}
\begin{proof}
  It follows from \Fact{pieri} and \Lemma{Sbasis} that
  \[
    c_p\, \sigma_\la = \sum_\mu \sigma_\mu
  \]
  where the sum is over all partitions $\mu$ with at most $m$ rows
  that can be obtained by adding a vertical strip of size $p$ to
  $\la$.  If such a partition satisfies $\mu_1 \leq k$, then the term
  $\sigma_\mu$ is included in the first sum of part (a).  Otherwise we
  have $\la_1 = k$ and $\mu_1 = k+1$, in which case the first row of
  the $m \times m$ determinant defining $\sigma_\mu$ is equal to
  $(0,\dots,0,(-1)^{m-1} \al q)$ modulo the ideal defining $\QH_\al$.
  If $\mu_m = 0$, then the last row of the same determinant is
  $(0,\dots,0,1)$, in which case $\sigma_\mu = 0$.  On the other hand,
  if $\mu_m > 0$, then $\sigma_\mu = \al q\, \sigma_\nu \in \QH_\al$
  where $\nu = (\mu_2-1,\dots,\mu_m-1,0)$.  The second sum of part (a)
  contains exactly the terms obtained in this way.

  Part (b) is equivalent to part (a) with $m$ and $k$ interchanged.
  To make this explicit, let $c' = (c'_1, \dots, c'_k, 0, 0, \dots)$
  be a sequence of $k$ variables, and set
  $\sigma'_p = \Delta_{(1^p)}(c')$ for $p \geq 0$ and
  $\sigma'_\mu = \Delta_\mu(\sigma')$ for $\mu \subset k \times m$.
  Let $\phi : \Q[c'_1,\dots,c'_k] \to \Q[c_1,\dots,c_m]$ be the ring
  homomorphism defined by $\phi(c'_j) = \sigma_j$ for
  $1 \leq j \leq k$.  For $1 \leq p < n$ we have
  $\phi(\sigma'_p) = \Delta_{(1^p)}(\sigma_1, \dots, \sigma_k, 0,
  \dots) \equiv \Delta_{(1^p)}(\sigma) = c_p$ modulo the ideal
  $J = \langle \sigma_{k+1},\dots,\sigma_{n-1}\rangle$.  This implies
  that
  $\phi(\sigma'_\mu) \equiv \Delta_\mu(c) = \Delta_{\mu'}(\sigma) =
  \sigma_{\mu'}$ (mod $J$) for $\mu \subset k \times m$.  We also have
  $\phi(\sigma'_n) = \Delta_{(1^n)}(\sigma_1, \dots, \sigma_k, 0,
  \dots) \equiv \Delta_{(1^n)}(\sigma) + (-1)^n \sigma_n = (-1)^n
  \sigma_n$ (mod $J$); this term appears because $\sigma_n$ is the
  upper-right entry of the determinant defining
  $\Delta_{(1^n)}(\sigma)$.  Define
  \[
    \QH'_\al = \Q[q,c'_1,\dots,c'_k]/\langle \sigma'_{m+1}, \dots,
    \sigma'_{n-1}, \sigma'_n + (-1)^k \al q\rangle \,.
  \]
  The observed properties of $\phi$ show that this map factors as a
  $\Q[q]$-algebra homomorphism $\phi : \QH'_\al \to \QH_\al$ given by
  $\phi(\sigma'_\mu) = \sigma_{\mu'}$ for $\mu \subset k \times m$.
  Part (b) of the proposition therefore follows from part (a) applied
  to $\QH'_\al$.
\end{proof}

\begin{cor}\label{cor:qh_basis}
  $\QH_\al$ is a free $\Q[q]$-module with basis
  $\{ \sigma_\la \mid \la \subset m \times k \}$.
\end{cor}
\begin{proof}
  It follows from \Proposition{qpieri}(a) that $\QH_\al$ is generated as
  a $\Q[q]$-module by the classes $\sigma_\la$ for
  $\la \subset m \times k$, and \Corollary{nzd} implies that $q$ is a
  non-zero divisor in $\QH_\al$.  Assume that
  $\sum_\la a_\la \sigma_\la = 0$ is a non-trivial relation in
  $\QH_\al$, where the sum is over $\la \subset m \times k$ and
  $a_\la \in \Q[q]$.  Since $q$ is a non-zero divisor, we may assume
  that $a_\la(0) \neq 0$ for some $\la$.  But then the relation
  $\sum_\la a_\la(0) \sigma_\la = 0$ in $\QH_\al/\langle q \rangle$
  contradicts \Lemma{Hbasis}.
\end{proof}

\subsection{The Seidel representation}

\begin{defn}
  Given a partition $\la \subset m \times k$, define the first
  \emph{Seidel shift} of $\la$ by
  \[
    \la\uparrow1 = \begin{cases}
      (\la_1+1, \la_2+1, \dots, \la_m+1) & \text{if $\la_1 < k$,}\\
      (\la_2,\la_3,\dots,\la_m,0) & \text{if $\la_1 = k$.}
    \end{cases}
  \]
  The $p$-th Seidel shift $\la\uparrow p$ is defined inductively by
  $\la\uparrow0 = \la$ and
  $\la\uparrow (p+1) = (\la\uparrow p)\uparrow1$ for $p \geq 0$.
\end{defn}

The first Seidel shift captures the following case of Bertram's
quantum Pieri formula (\Proposition{qpieri}):
\begin{equation}\label{eqn:c-seidel}
  c_m\, \sigma_\la = (\al q)^{\frac{m+|\la|-|\la\uparrow1|}{n}}
  \sigma_{\la\uparrow1} \,.
\end{equation}
Notice that Seidel shifts of the zero partition are given by
\begin{equation}\label{eqn:seidelzero}
  0\uparrow p = \begin{cases}
    (p^m) & \text{for $0 \leq p \leq k$,}\\
    (k^{n-p}) & \text{for $k \leq p \leq n$.}
  \end{cases}
\end{equation}
It follows that $c_m^n = (\al q)^m$ and
$(\al q)^m \sigma_\la = c_m^n \sigma_\la = (\al
q)^{\frac{mn+|\la|-|\la\uparrow n|}{n}} \sigma_{\la\uparrow n}$, so we
obtain $\la\uparrow n = \la$ for any partition
$\la \subset m \times k$.  For any integer $p \in \Z$ we can therefore
define $\la\uparrow p = \la\uparrow p'$ where $p' \geq 0$ is chosen so
that $p' \equiv p$ (mod $n$).  We write
$\la\downarrow p = \la\uparrow(-p)$ for convenience.  With this
notation we have
\begin{equation}\label{eqn:t-seidel}
  \sigma_k\, \sigma_\la = (\al q)^{\frac{k+|\la|-|\la\downarrow1|}{n}}
  \sigma_{\la\downarrow1}
\end{equation}
in $\QH_\al$ by \Proposition{qpieri}(b).

\begin{lemma}\label{lemma:seidelsum}
  For any partition $\la \subset m \times k$ we have
  $\sum_{p=0}^{n-1} |\la\uparrow p| = \frac{1}{2} kmn$.
\end{lemma}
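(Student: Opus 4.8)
The plan is to translate the identity into a statement about $m$-element subsets of $\Z/n\Z$, on which the Seidel shift is literally a cyclic rotation; once that dictionary is in place, the sum $\sum_{p=0}^{n-1}|\la\uparrow p|$ can be evaluated by a symmetry argument in which the dependence on $\la$ cancels, leaving the constant $\tfrac12 kmn$. To set this up, associate to each $\la\subset m\times k$ the set
\[
  A(\la) = \{\, \la_j + m + 1 - j \mid 1 \le j \le m \,\}.
\]
Since $0\le\la_m\le\dots\le\la_1\le k$, the listed numbers form a strictly decreasing sequence of $m$ integers in $\{1,\dots,n\}$, so $A(\la)$ is an $m$-element subset of $\{1,\dots,n\}$, and a one-line computation gives $|\la| = \bigl(\sum_{a\in A(\la)} a\bigr) - \binom{m+1}{2}$. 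I will regard $A(\la)$ as a subset of $\Z/n\Z$ and write $\rho\colon\Z/n\Z\to\{1,\dots,n\}$ for the map picking out the representative in $\{1,\dots,n\}$, so that $|\la| = \sum_{a\in A(\la)}\rho(a) - \binom{m+1}{2}$.

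The key step is to verify that $A(\la\uparrow1) = A(\la)+1$ in $\Z/n\Z$. If $\la_1<k$, then $\la\uparrow1 = (\la_1+1,\dots,\la_m+1)$ and $\max A(\la) = \la_1+m < n$, so $A(\la\uparrow1) = \{a+1\mid a\in A(\la)\}$ with no wrap-around. If $\la_1 = k$, then $\la\uparrow1 = (\la_2,\dots,\la_m,0)$: the new last part $0$ contributes the element $1$, the parts $\la_2,\dots,\la_m$ contribute $(A(\la)\setminus\{\la_1+m\})+1$, and because $\la_1+m = n$ the deleted element, shifted, is $n+1\equiv 1$. In both cases $A(\la\uparrow1) = A(\la)+1$, and by induction $A(\la\uparrow p) = A(\la)+p$ for all $p\in\Z$.

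It then remains to evaluate the sum. Using $A(\la\uparrow p) = A(\la)+p$,
\[
  \sum_{p=0}^{n-1} |\la\uparrow p|
  = \sum_{p=0}^{n-1}\biggl(\sum_{a\in A(\la)}\rho(a+p) - \binom{m+1}{2}\biggr)
  = \sum_{a\in A(\la)}\ \sum_{p=0}^{n-1}\rho(a+p) \ - \ n\binom{m+1}{2}.
\]
For each fixed $a$, as $p$ runs over $\Z/n\Z$ so does $a+p$, hence $\sum_{p=0}^{n-1}\rho(a+p) = 1+2+\dots+n = \binom{n+1}{2}$, independently of $a$. Therefore the sum equals $m\binom{n+1}{2} - n\binom{m+1}{2} = \tfrac12 mn\bigl((n+1)-(m+1)\bigr) = \tfrac12 mn(n-m) = \tfrac12 kmn$. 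The only real obstacle is choosing the encoding and checking the correspondence $A(\la\uparrow1) = A(\la)+1$; the wrap-around in the case $\la_1 = k$ is where one must be careful, but everything else is routine bookkeeping and the final sum is a pure symmetry count. As a consistency check this recovers \eqn{seidelzero}: $A(0) = \{1,\dots,m\}$, and $A(0)+p = \{p+1,\dots,p+m\}\bmod n$ translates back to $(p^m)$ for $p\le k$ and to $(k^{\,n-p})$ for $p\ge k$.
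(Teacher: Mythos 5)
Your proof is correct, and it takes a genuinely different route from the paper's. The paper argues by induction on boxes: it first checks the identity for $\la = 0$ via \eqn{seidelzero}, then shows the sum $\sum_{p=0}^{n-1}|\la\uparrow p|$ is unchanged when a single box is added to $\la$, using a cyclic shift to reduce to the case where the added box sits in the upper-right corner of $m\times k$ and then verifying $|\mu\downarrow p| = |\la\downarrow p| + 1$ for $0\le p\le n-2$ together with $|\mu\uparrow 1| = |\la\uparrow 1| - n + 1$. You instead encode each $\la\subset m\times k$ as the $m$-element subset $A(\la)=\{\la_j+m+1-j\}$ of $\{1,\dots,n\}$, observe that the Seidel shift becomes literally the cyclic rotation $A\mapsto A+1$ of $\Z/n\Z$, and then the full-orbit sum collapses by symmetry since $\sum_{p=0}^{n-1}\rho(a+p)=\binom{n+1}{2}$ is independent of $a$. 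The wrap-around check in the case $\la_1=k$ (the element $n$ becomes $1$) is exactly the right place to be careful, and you handle it correctly. Your approach is more structural: it makes transparent \emph{why} the constant $\tfrac12 kmn$ appears (it is a symmetry average over the $\Z/n\Z$-orbit), and it also immediately yields the fact $\la\uparrow n = \la$, which the paper derives separately from the relation $c_m^n = (\al q)^m$. The paper's inductive argument is shorter given the tools already on the page, but yours is arguably the more illuminating explanation of the phenomenon.
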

\begin{proof}
  The lemma follows from \eqn{seidelzero} when $\la = 0$, so it
  suffices to show that
  $\sum_{p=0}^{n-1} |\mu\uparrow p| = \sum_{p=0}^{n-1} |\la\uparrow
  p|$ holds whenever the partition $\mu$ is obtained by adding a
  single box to $\la$, say in row $i$ and column $j$.  By replacing
  the partitions with $\la\uparrow(k-j+i-1)$ and
  $\mu\uparrow(k-j+i-1)$, we may assume that the added box is in the
  upper-right corner of the rectangle $m \times k$, so that
  $\la_1 = k-1$ and $\mu_1 = k$.  The identity then follows because
  $|\mu\downarrow p| = |\la\downarrow p| + 1$ for $0 \leq p \leq n-2$
  and $|\mu\uparrow1| = |\la\uparrow1| - n+1$.
\end{proof}

\begin{cor}\label{cor:seidelsize}
  Let $\la$ and $\mu$ be partitions contained in $m \times k$ such
  that $|\la| > |\mu|$.  Then there exists $p \in \Z$ for which
  $|\la\uparrow p| < |\mu\uparrow p|$.
\end{cor}

\subsection{Main results}

Our classification of non-negative quantum deformations of
Grassmannians is based on the following theorem.

\begin{thm}\label{thm:classify_qdeform}
  Let $\{ \tau_\la \mid \la \subset m \times k \}$ be any
  $\Q[q]$-basis of $\QH_\al$ such that {\rm(1)} $\tau_\la$ is
  homogeneous of degree $|\la|$, {\rm(2)}
  $\tau_\la - \sigma_\la \in \langle q \rangle$, and {\rm(3)} the
  structure constants of $\QH_\al$ with respect to the $\Q$-basis
  $\{ q^d \tau_\la \mid \la \subset m \times k, d \geq 0 \}$ are
  non-negative.  Then we have $\tau_\la = \sigma_\la$ for all $\la$.
\end{thm}
\begin{proof}
  For each $\la \subset m \times k$ it follows from (1) and (2) that
  we can write
  \begin{equation}\label{eqn:tau2sigma}
    \tau_\la = \sigma_\la + \sum_{|\mu|<|\la|} a_{\la,\mu}\,
    q^{\frac{|\la|-|\mu|}{n}}\, \sigma_\mu \,,
  \end{equation}
  where $a_{\la,\mu} \in \Q$ is non-zero only if $|\la|-|\mu|$ is a
  positive multiple of $n$.  We must show that $a_{\la,\mu} = 0$ for
  all $\la$ and $\mu$.  Since $c_m \tau_k = \al q$, we must have
  $\al \geq 0$ by (3).

  Assume first that $\al = 0$.  If the theorem is false, then let
  $\la \subset m \times k$ be minimal in the degree-lexicographic
  order such that $\tau_\la \neq \sigma_\la$.  Set $\ell = \ell(\la)$
  and let $\wh\la = (\la_1-1,\la_2-1,\dots,\la_\ell-1)$ be the
  partition obtained by removing the first column of $\la$.
  \Proposition{qpieri}(a) shows that
  \[
    c_\ell \tau_{\wh\la} =
    c_\ell \sigma_{\wh\la} =
    \sum_\nu \sigma_\nu \,,
  \]
  where the sum is over all partitions $\nu \subset m \times k$ that
  can be obtained by adding a vertical strip of $\ell$ boxes to
  $\wh\la$.  Since $\la$ is the maximal such partition in the
  degree-lexicographic order, we obtain
  \[
    c_\ell \tau_{\wh\la} = \sigma_\la + \sum_{\nu\neq\la} \sigma_\nu =
    \tau_\la - \sum_{|\mu|<|\la|} a_{\la,\mu}\,
    q^{\frac{|\la|-|\mu|}{n}}\, \tau_\mu + \sum_{\nu \neq \la}
    \tau_\nu \,.
  \]
  This shows that $-a_{\la,\mu}$ is a structure constant of the basis
  $\cT = \{ q^d \tau_\la \}$ for each $\mu$, so (3) implies that
  $a_{\la,\mu} \leq 0$.  Now choose $\nu$ with $a_{\la,\nu} < 0$ such
  that the dual partition $\nu^\vee = (k-\nu_m,\dots,k-\nu_1)$ is
  maximal in the degree-lexicographic order.  If $\mu$ is any
  partition for which $a_{\la,\mu} \neq 0$, then either $\nu^\vee$ is
  larger than $\mu^\vee$ in the lexicographic order, in which case it
  follows from \Proposition{qpieri}(b) and induction on $p$ that
  \[
    \sigma_{\nu^\vee_1} \cdots \sigma_{\nu^\vee_p}\, \sigma_\mu =
    \begin{cases}
      \sigma_{(k^p,\mu_1,\dots,\mu_{m-p})} &
      \text{if $\mu_i=\nu_i$ for $m-p<i\leq m$,}\\
      0 & \text{otherwise.}
    \end{cases}
  \]
  Otherwise we have $|\nu^\vee| > |\mu^\vee|$, in which case
  $|\nu^\vee| + |\mu| > mk$ and
  $\sigma_{\nu^\vee_1} \cdots \sigma_{\nu^\vee_m}\, \sigma_\mu = 0$.

  Applying this to equation \eqn{tau2sigma} we therefore obtain
  \[
    \tau_{\nu_1^\vee} \cdots \tau_{\nu_m^\vee} \tau_\la
    = a_{\la,\nu}\, q^{\frac{|\la|-|\nu|}{n}} \sigma_{(k^m)} \,.
  \]
  Finally, since the coefficient of $\tau_{(k^m)}$ in the
  $\cT$-expansion of $\sigma_{(k^m)}$ is equal to 1, we deduce that
  $a_{\la,\nu}$ is an iterated structure constant, so (3) implies that
  $a_{\la,\nu} \geq 0$, a contradiction.

  Assume now that $\al > 0$.  Then $c_m \tau_k = \al q$ is a non-zero
  multiple of $q$.  If $\la \subset m \times k$ is any partition with
  $\la_1 < k$, then since
  $[X^{(1^m)}]\cdot [X^\la] = [X^{\la\uparrow1}]$ holds in
  $H^*(X,\Q)$, it follows from \Lemma{basis2basis} that
  $c_m \tau_\la = \tau_{\la\uparrow1}$.  On the other hand, if
  $\la_1 = k$, then $[X^k] \cdot [X^{\la\uparrow1}] = [X^\la]$ holds
  in $H^*(X,\Q)$, so \Lemma{basis2basis} implies that
  $c_m \tau_\la = c_m \tau_k \tau_{\la\uparrow1} = \al q\,
  \tau_{\la\uparrow1}$.  This shows that multiplication by $c_m$ in
  the basis $\cT$ is determined by
  \begin{equation}\label{eqn:seidel-c-tau}
    c_m \tau_\la = (\al q)^{\frac{m+|\la|-|\la\uparrow1|}{n}}
    \tau_{\la\uparrow1} \,.
  \end{equation}

  If $\tau_\la \neq \sigma_\la$, then choose $\mu$ such that
  $a_{\la,\mu} \neq 0$.  By \Corollary{seidelsize} we can choose $p$
  such that $|\la\uparrow p| < |\mu\uparrow p|$.  Equation
  \eqn{seidel-c-tau} then implies that
  $c_m^p \tau_\la = (\al q)^d \tau_{\la\uparrow p}$ where
  $d = \frac{mp+|\la|-|\la\uparrow p|}{n}$, so $c_m^p \tau_\la$ is
  divisible by $q^d$.  However, equation \eqn{c-seidel} shows that the
  product of $c_m^p$ with the right hand side of \eqn{tau2sigma}
  contains the term
  \[
    a_{\la,\mu}\, q^{\frac{|\la|-|\mu|}{n}}\,
    (\al q)^{\frac{mp+|\mu|-|\mu\uparrow p|}{n}}
    \sigma_{\mu\uparrow p} \,,
  \]
  and since $|\la\uparrow p| < |\mu\uparrow p|$, this term is not
  divisible by $q^d$.  This contradicts that $c_m^p \tau_\la$ is
  divisible by $q^d$, which completes the proof.
\end{proof}

\begin{cor}\label{cor:classify_qdeform}
  Let $X = \Gr(m,n)$ and let $\QH_\al = (\QH_\al,\{\sigma_\la\})$ be
  the quantum deformation of $H^*(X,\Q)$ defined
  in \Section{qdeform_construct}.\smallskip

  \noin{\rm(a)} The deformation $\QH_\al$ is non-negative if and only
  if $\al \geq 0$.\smallskip

  \noin{\rm(b)} Any non-negative quantum deformation of $H^*(X,\Q)$ is
  isomorphic to $\QH_\al$ for a unique rational number
  $\al \geq 0$.\smallskip

  \noin{\rm(c)} $\QH(X) \cong \QH_1$ is the only non-negative
  change-of-basis quantum deformation of $H^*(X,\Q)$.
\end{cor}
\begin{proof}
  Given any quantum deformation $(\QH, \{\tau_\la\})$ of $H^*(X,\Q)$,
  it follows from \Definition{qdeform} that
  $\tau_{(1^m)} \tau_k = \al q$ for some $\al \in \Q$.  Define a ring
  homomorphism $\phi : S[q] \to \QH$ by $\phi(c_p) = \tau_{(1^p)}$ and
  $\phi(q) = q$.  \Definition{qdeform} then implies that
  $\phi(\sigma_p) = \tau_p$ for $1 \leq p \leq k$,
  $\phi(\sigma_p) = 0$ for $k < p < n$, and
  $\phi(\sigma_n) = \phi\left(\sum_{p=1}^m (-1)^{p-1} c_p
    \sigma_{n-p}\right) = (-1)^{m-1} \tau_{(1^m)} \tau_k = (-1)^{m-1}
  \al q$.  We deduce that $\phi$ factors as an isomorphism of
  $\Q[q]$-algebras $\psi: \QH_\al \to \QH$, and the set
  $\{ \psi^{-1}(\tau_\la) \}$ is a deformation of the basis
  $\{ \sigma_\la \}$ of $\QH_\al$.  If $\QH$ is a non-negative quantum
  deformation, then \Theorem{classify_qdeform} implies that
  $\psi^{-1}(\tau_\la) = \sigma_\la$ for each $\la$.  This proves part
  (b).

  Part (c) follows from part (b) by using that $\QH(X)$ is a
  non-negative quantum deformation of $H^*(X,\Q)$ satisfying the
  relation $[X^{(1^m)}] \star [X^k] = q$.  This relation is equivalent
  to Witten's presentation of $\QH(X)$ \cite[\S 3.2]{witten:verlinde}
  (see also \cite[Prop.~3.1]{siebert.tian:quantum}).  The uniqueness
  statement of part (c) follows from \Theorem{classify_qdeform}
  applied to $\QH_1$.

  For $\al > 0$ the assignments $q \mapsto \al q$ and
  $c_p \mapsto c_p$ define an isomorphism of graded rings
  $\QH_1 \cong \QH_\al$.  Since this map sends $\sigma_\la$ to
  $\sigma_\la$, it follows that the structure constants of $\QH_\al$
  are the rescaled Gromov-Witten invariants
  $\al^d \gw{[X^\la],[X^\mu],[X_\nu]}{d}$.  This shows that $\QH_\al$
  is non-negative for $\al > 0$.  The trivial deformation
  $\QH_0 = H^*(X,\Q) \otimes_\Q \Q[q]$ is also non-negative, and the
  relation $c_m \sigma_k = \al q$ shows that $\QH_\al$ is not
  non-negative for $\al < 0$.  This proves part (a).
\end{proof}

\begin{cor}\label{bertram}
  The quantum cohomology ring $\QH(X)$ satisfies Bertram's quantum
  Giambelli formula
  \[
    [X^\la] = \det \left( [X^{\la_i+j-i}] \right)_{m \times m}
  \]
  and quantum Pieri formula
  \[
    [X^p] \star [X^\la] = \sum_\mu [X^\mu] + \sum_\nu q\, [X^\nu] \,,
  \]
  where the sums are over the same partitions as in
  \Proposition{qpieri}(b).
\end{cor}
\begin{proof}
  This follows from \Corollary{classify_qdeform},
  \Proposition{qpieri}, and the definition of $\sigma_\la$.
\end{proof}

\begin{example}\label{example:counterexample}
  Let $X = \GL(6)/B$ be the variety of complete flags in $\C^6$ and
  set $t = (6,1,2,3,4,5)$.  We have $\pi_1(\Aut(X)) \cong \Z/6\Z$, and
  the Seidel representation of this group on
  $\QH(X)/\langle q-1 \rangle$ is defined by quantum multiplication
  with $[X^t]$, see \cite{seidel:1, chaput.manivel.ea:affine}.
  Products with this class modulo $\langle q-1 \rangle$ are given by
  $[X^t] \star [X^w] = [X^{tw}]$ \cite{ciocan-fontanine:quantum*2}.
  The following table shows the Seidel orbits of the Schubert classes
  given by $e = (1,2,3,4,5,6)$ and $v = (3,2,1,6,5,4)$, using the
  notation $w \uparrow r = t^r w$.\medskip

  \begin{center}
    \begin{tabular}{|c|c|c|c|c|}
      \hline
      $r$ & $e\uparrow r$ & $\ell(e\uparrow r)$
      & $v\uparrow r$ & $\ell(v\uparrow r)$ \\
      \hline
      0 & 123456 & 0 & 321654 & 6 \\
      1 & 612345 & 5 & 216543 & 7 \\
      2 & 561234 & 8 & 165432 & 10 \\
      3 & 456123 & 9 & 654321 & 15 \\
      4 & 345612 & 8 & 543216 & 10 \\
      5 & 234561 & 5 & 432165 & 7 \\
      \hline
    \end{tabular}
  \end{center}
  \medskip

  \noin This shows that the analogues of \Lemma{seidelsum} and
  \Corollary{seidelsize} for varieties of complete flags are false.
\end{example}

\providecommand{\bysame}{\leavevmode\hbox to3em{\hrulefill}\thinspace}
\providecommand{\MR}{\relax\ifhmode\unskip\space\fi MR }
\providecommand{\MRhref}[2]{%
  \href{http://www.ams.org/mathscinet-getitem?mr=#1}{#2}
}
\providecommand{\href}[2]{#2}

\bibliographystyle{amsplain}

\end{document}